\DeclareMathOperator{\C}{\mathcal{C}}
\DeclareMathOperator{\GammaL}{\Gamma\mathrm{L}}
\newtheorem{theorem}{Theorem}[section]
\newtheorem{corollary}[theorem]{Corollary}
\newtheorem{proposition}[theorem]{Proposition}
\newtheorem{remark}[theorem]{Remark}
\newcommand{\fqn}{\mathbb{F}_{q^n}}
\newcommand{\F}{{\mathbb F}}
\newcommand{\fq}{{\mathbb F}_{q}}
\newcommand{\la}{\langle}
\newcommand{\ra}{\rangle}
\newcommand{\PG}{\mathrm{PG}}
\newcommand{\N}{\mathrm{N}}
\newcommand{\classcode}{\mathcal{C}[m,r,d]_q}
\newcommand{\classsystem}{\mathcal{P}[m,r,d]_q}
\title{Two pointsets in $\mathrm{PG}(2,q^n)$ and the associated codes}
\date{}
\author[V. Napolitano]{Vito Napolitano}
\address{Vito Napolitano, \textnormal{Dipartimento di Matematica e Fisica, Universit\`a degli Studi della Campania ``Luigi Vanvitelli'', Viale Lincoln, 5, I--\,81100 Caserta, Italy}}
\email{vito.napolitano@unicampania.it}
\author[O. Polverino]{Olga Polverino}
\address{Olga Polverino, \textnormal{Dipartimento di Matematica e Fisica, Universit\`a degli Studi della Campania ``Luigi Vanvitelli'', Viale Lincoln, 5, I--\,81100 Caserta, Italy}}
\email{olga.polverino@unicampania.it}
\author[P. Santonastaso]{Paolo Santonastaso}
\address{Paolo Santonastaso, \textnormal{Dipartimento di Matematica e Fisica, Universit\`a degli Studi della Campania ``Luigi Vanvitelli'', Viale Lincoln, 5, I--\,81100 Caserta, Italy}}
\email{paolo.santonastaso@unicampania.it}
\author[F. Zullo]{Ferdinando Zullo}
\address{Ferdinando Zullo, \textnormal{Dipartimento di Matematica e Fisica, Universit\`a degli Studi della Campania ``Luigi Vanvitelli'', Viale Lincoln, 5, I--\,81100 Caserta, Italy}}
\email{ferdinando.zullo@unicampania.it}
\subjclass[2020]{11T71; 11T06; 94B05} 
\keywords{Linear set; blocking set;  linearized polynomial; scattered linear set; Hamming metric}
\begin{document}

\maketitle

\begin{abstract}
In this paper we consider two pointsets in $\mathrm{PG}(2,q^n)$ arising from a linear set $L$ of rank $n$ contained in a line of $\mathrm{PG}(2,q^n)$: the first one is a linear blocking set of R\'edei type, the second one extends the construction of translation KM-arcs.
We  point out that their intersections pattern with lines is related to the weight distribution of the considered linear set $L$. We then consider the Hamming metric codes associated with both these constructions, for which we can completely describe their weight distributions. 
By choosing $L$ to be an $\fq$-linear set with a {\it short} weight distribution, then the associated codes have \emph{few weights}. We conclude the paper by providing a connection between the $\Gamma\mathrm{L}$-class of $L$ and the number of inequivalent codes we can construct starting from it.
\end{abstract}

\section{Introduction}
In the study of the geometry of $\mathrm{PG}(r, q)$,   special attention has been paid to the study of its subsets of points based on their behavior with respect to the intersections they have with all the members of one or more  assigned  families of subspaces. Such sets, when  the prescribed  family of the subspaces is that of the hyperplanes and the range of the size of their intersections with hyperplanes is small,  are related with other combinatorial objects such as for example graphs,  some classes of difference sets and linear codes with few weights, see e.g. \cite{CG1984,CK1986,DingDing,Durante,ShiSo}. When considering this last relationship, to which we are interested in this paper,  the existence problem  takes on a special role. Indeed, such sets gives rise to a linear code with few weights whose generator matrix is given by a matrix whose columns are the coordinates of the  points of the given set and vice versa, given  a linear code with few weights, the columns of (one of)  its generator matrix seen as coordinates of points in a  projective space form  a (multi)set   with few intersection sizes with respect to the hyperplanes of the space.

Our aim is to find codes arising from linear sets (or pointsets related to them) having \emph{few} weights in the spirit of \cite{NZ,ZiniZullo}, see also \cite[Section 4]{Alfarano2}. Let $\PG(2,q^n)=\PG(V,\F_{q^n})$, with $\dim_{\F_{q^n}}(V)=3$, up to coordinatize, we can suppose that $V=\F_{q^n}^3$.
In this paper, we exploit two constructions of pointsets in $\PG(2,q^n)$ arising from linear sets of rank $n$ on a projective line. Let $L_U$ be a linear set of rank $n$ in $\mathrm{PG}(1,q^n)$ (up to equivalence) $L_U=\{\langle (x,f(x) \rangle \colon x \in \F_{q^n}^*\} \subseteq \PG(1,q^n)$, where $f(x)$ is a $q$-polynomial. Then we may define the following two pointsets of $\PG(2,q^n)$:
\begin{enumerate}
    \item $\mathcal{B}_f=\mathcal{G}_f \cup \mathcal{D}_f$;
    \item $\mathcal{C}_f=\mathcal{G}_f \cup (\ell_{\infty} \setminus \mathcal{D}_f)$,
\end{enumerate}
where $\mathcal{D}_f=\{\langle (x,f(x),0) \rangle_{\fqn} \colon x \in \fqn^*\}$ and $\mathcal{G}_f=\{\langle (x,f(x),1) \rangle_{\fqn} \colon x \in \fqn\}$.
The pointset $\mathcal{B}_f$ is an $\fq$-linear blocking set of R\'edei type, whereas the pointset $\C_f$ extends the pointset used to construct translation KM-arcs introduced in \cite[Theorem 2.1]{DeBoeckVdV2016}.
We study their intersections with lines in $\mathrm{PG}(2,q^n)$ and we point out that the pattern of their intersections with lines is strongly related to the weight distribution of the linear set $\mathcal{D}_f$.
This investigation allows us to construct a large number of linear codes for which we can completely describe the weight distribution, also obtaining linear codes with few weights.
Finally we prove that the $\Gamma\mathrm{L}$-class of $\mathcal{D}_f$ can provide an estimation of the number of inequivalent codes one can get from a fixed $\mathcal{B}_f$ or $\mathcal{C}_f$.\\

The paper is organized as follows. Section \ref{sec:prel} contains basic definitions and a brief survey on linear sets and on the relationship between linear codes and multisets of points in a projective space,  in order to make the paper self--contained.  In Section \ref{sec:2constr}, we investigate the pointsets $\mathcal{B}_f$ and $\mathcal{C}_f$ of $\mathrm{PG}(2, q^n)$ by determining their intersections with lines. In Section \ref{sec:connect}, using the well--known  equivalence between linear codes and multisets of a projective space, we describe the parameters of the related codes. Finally, Section \ref{sec:equiv} is devoted to the study of the relation between the projective equivalence of the pointsets $\mathcal{B}_f$ or $\mathcal{C}_f$ and the related codes.

\section{Preliminaries}\label{sec:prel}

We start fixing the following notation. Let $p$ be a prime and $h$ a positive integer. We fix $q=p^h$ and denote by $\fq$ the finite field with $q$ elements. Moreover, we fix a positive integer $n$ and consider the extension field $\fqn$ of degree $n\geq2$ over $\fq$. 
Recall that for the extension $\fqn/\fq$, the \emph{norm} of an element $\alpha \in \fqn$ is defined as
$$ \mathrm{N}_{q^n/q}(\alpha):= \prod_{i=0}^{n-1}\alpha^{q^i},$$
and the \emph{trace} of an element $\alpha \in \fqn$ is defined as
$$ \mathrm{Tr}_{q^n/q}(\alpha):= \sum_{i=0}^{n-1}\alpha^{q^i}.$$

\subsection{Linear sets}

Let $V$ be an $r$-dimensional vector space over $\fqn$ and let $\Lambda=\PG(V,\F_{q^n})=\PG(r-1,q^n)$.
Let $U$ be an $\fq$-subspace of $V$ of dimension $k$, then the set of points
\[ L_U=\{\la {\bf u} \ra_{\mathbb{F}_{q^n}} : {\bf u}\in U\setminus \{{\bf 0} \}\}\subseteq \Lambda \]
is said to be an $\fq$-\emph{linear set of rank $k$}.
Let $P=\langle \mathbf{v}\rangle_{\fqn}$ be a point in $\Lambda$. The \emph{weight of $P$ in $L_U$} is defined as 
\[ w_{L_U}(P)=\dim_{\fq}(U\cap \langle \mathbf{v}\rangle_{\fqn}). \]

We say that an $\fq$-linear set $L_U$ of rank $k$ has \emph{weight distribution} (with respect to $U$) $(i_1,\ldots,i_t)$ with $1\leq i_1 <i_2 < \ldots <i_t\leq k$ if for every $P \in L_U$ 
\[ w_{L_U}(P) \in \{i_1,\ldots,i_t\} \]
and each of  these integers $i_j$ occurs as the weight of at least one point of $L_U$. Also, we will refer to $t$ as the {\it length} of the weight distribution.
Note that if $i_j,i_s \in \{i_1,\ldots,i_t\}$ and $i_j\neq i_s$ then
\begin{equation}\label{eq:pointsum}
i_j+i_s \leq k.
\end{equation}

If $N_{i_j}$ denotes the number of points of $\Lambda$ having weight  $i_j\in \{i_1,\ldots,i_t\}$  in $L_U$  we say that the weight $i_j$ has {\it frequency} $N_{i_j}$. 
The size and  the rank of $L_U$, as well as  the weights and their frequencies, are related by  the following relations:
\begin{equation}\label{eq:card}
    |L_U| \leq \frac{q^k-1}{q-1},
\end{equation}
\begin{equation}\label{eq:pesicard}
    |L_U| =N_{i_1}+\ldots+N_{i_t},
\end{equation}
\begin{equation}\label{eq:pesivett}
   N_{i_1} (q^{i_1-1}+\ldots+q+1)+N_{i_2}(q^{i_2-1}+\ldots+q+1)+\ldots+N_{i_t}(q^{i_t-1}+\ldots+q+1)=q^{k-1}+\ldots+q+1.
\end{equation}
Linear sets attaining the bound in \eqref{eq:card} are called \emph{scattered}.
An $i$-\emph{club} ($1\leq i<n$) in $\PG(1,q^n)$ is an $\F_{q}$-linear set $\mathcal{D}_f$  of rank $n$ in $\PG(1,q^n)$ such that one point has weight $i$ and all the others have weight one,  i.e. $\mathcal{D}_f$ has weight distribution $(1,i)$  and the frequency of the weight $i$ is $1$.
In particular, a $1$-club is a
scattered linear set.
By \eqref{eq:pesicard} and \eqref{eq:pesivett}, we have that an $i$-club has size $q^{n-1}+\ldots+q^i+1$.  Examples of $i$-club of $\PG(1,q^n)$ are known  for $i\in \{n-2,n-1\}$ and $i\in \{r(t-1),r(t-1)+1\}$ if $n=rt$  (see Table \ref{clubpoly}).
As we will recall later, the relevance of clubs arises in the study of a special type of arcs in the projective plane when $q=2$.

Two linear sets $L_U$ and $L_W$ of $\PG (r-1, q^n)$ are said to be $\mathrm{P\Gamma L}$-\emph{equivalent} (or simply \emph{equivalent}) if there is an element $\phi$ in $\mathrm{P\Gamma L}(r,q^n)$ such that $L_U^\phi=L_W$. In the applications it is crucial to have methods to decide whether or not two linear sets are equivalent. For $f\in \mathrm{\Gamma L}(r,q^n)$ we have $L_{U^f}=L_U^{\phi_f}$, where $\phi_f$ denotes the collineation of $\PG (V,\fqn)$ induced by $f$. It follows that if $U$ and $W$ are $\fq$-subspaces of $V$ belonging to the same orbit of $\mathrm{\Gamma L}(r,q^n)$, then $L_U$ and $L_W$ are equivalent. The above condition is only sufficient but not necessary to obtain equivalent linear sets. In the projective line, this fact motivates the following definition from \cite{CsMP}.

Let $L_U$ be an $\mathbb{F}_q-$linear set of $\PG(V,\mathbb{F}_{q^n})=\PG(1,q^n)$ of rank $n$ with maximum field of linearity $\mathbb{F}_q$(\footnote{The \emph{maximum field of linearity} of a linear set is $\mathbb{F}_{q^d}$ if $d$ the greatest positive integer such that $d \mid n$ and $L_U$ is $\mathbb{F}_{q^d}$-linear, i.e. $L_U \neq L_W$ for each $\F_{q^{\ell}}$-subspace $W$ of $\F_{q^n}$ such that $\ell \vert n$ and $\ell>d$. Similar definitions may be given on subspaces and $q$-polynomials.}). We say that $L=L_U$ is of $\mathrm{\Gamma L}$-\emph{class} $s$ if $s$ is the greatest integer such that there exist $\mathbb{F}_q-$subspaces $U_1,\ldots,U_s$ of $V$ with $L_{U_i}=L_U$ for $i \in \{1,\ldots,s\}$ and there is no $f \in \Gamma \mathrm{L}(2,q^n)$ such that $U_i=U_j^f$ for each $i\neq j$, $i,j \in \{1,2,\ldots,s\}$. 
The $\fq$-subspaces $U_1,\ldots,U_s$ are called the $\mathrm{\Gamma L}$-\emph{representatives} of $L$.
If $L_U$ has $\Gamma \mathrm{L}$-class one, then $L_U$ is said to be \emph{simple}.
Note that if $L$ is an $\fq$-linear set of $\Gamma \mathrm{L}$-class $s$ and $U_1, \ldots, U_s$ are $s$ $\Gamma \mathrm{L}$-representatives of $L$, then the $\mathrm{P\Gamma L}(r,q^n)$-orbit of $L$ is obtained by the $s$ $\Gamma\mathrm{L}(r,q^n)$-orbits of $U_1,\ldots,U_s$, namely \[
O_{\mathrm{P\Gamma L}}=\cup_{i=1}^s \{L_W \colon W \in O_{\Gamma\mathrm{L}}(U_i) \}.
\]
So if $L_U$ is a simple linear set, then $L_U$ is projectively equivalent to $L_{U'}$, where $U'$ is an $\F_q$-subspace of rank $n$ if and only if $U$ and $U'$ are $\GammaL$-equivalent.

Moreover, since ${\rm P\Gamma L}(1,q^n)$ is $3$-transitive on $\PG(1,q^n)$, we can assume up to equivalence that $L$ does not contain the point $\langle(0,1)\rangle_{\fqn}$.
This implies that $L$ equals $L_f=L_{U_f}$, where
\[U_{f}=\{(x,f(x)) \colon x \in \fqn\}\] 
for some $q$-polynomial 
\[f(x)\in\mathcal{L}_{n,q}=\left\{ \sum_{i=0}^{n-1} a_ix^{q^i} \colon a_i \in \fqn \right\}.\]
If $L_f$ turns out to be scattered, then we will also say that $f(x)$ is \emph{scattered}, following \cite{John}. 

We refer to \cite{LavVdV} and \cite{Polverino} for comprehensive references on linear sets.

\subsection{Linear codes and (multi)sets of points in finite projective spaces} 

A $q$-\emph{ary linear code} $\C$ is any $\F_q$-subspace of $\F_{q}^m$.
If $\C$ has dimension $r$, we say that $\C$ is an $[m, r]_q$-\emph{code}.
A \emph{generator matrix} $G$ for an $[m,r]_q$-code $\C$ is an $(r \times m)$-matrix over $\F_q$ whose rows form a basis of $\C$, i.e.
\[ \C=\{\mathbf{x}G \colon \mathbf{x}\in \F_q^r\}. \]

We may consider $\F_q^m$ as a metric space endowed with the \emph{Hamming distance} $d$, i.e. $d(\mathbf{x},\mathbf{y})$ is the
number of entries in which $\mathbf{x}$ and $\mathbf{y}$ differ, with $\mathbf{x},\mathbf{y} \in \F_q^m$.
The \emph{Hamming support} of a vector $\mathbf{x} \in \fq^m$ is $\sigma(\mathbf{x})=\{i:x_i \neq 0\}$ and its \emph{Hamming weight} is $w(\mathbf{x})= \lvert \sigma(\mathbf{x}) \rvert$ or equivalently $w(\mathbf{c})=d(\mathbf{c},\mathbf{0})$. 
Recall that the minimum weight of a linear code coincides with its minimum distance. 
A $q$--ary linear code of length $m$, dimension $r$ and minimum distance $d$ is referred to as an $[m, r, d]_q$-code or as an $[m,r]_q$-code if the minimum distance is not relevant/known. A code $\C$ is said to be Hamming non-degenerate if $\cup_{\mathbf{c} \in \C} \sigma(\mathbf{c})=\{1,\ldots,m\}$.
Let denote by $A_i$ the number of codewords of $\C$ with Hamming weight $i$.
The \emph{(Hamming) weight enumerator} is defined as the following polynomial:
\[ 1+A_1 z+\ldots+A_m z^m. \]
This polynomial gives a good deal of information about the code and it is an important invariant under code equivalence, and has been calculated for few families of codes. Also, it is used in the probability theory involved with different ways of decoding.
An $\ell$-\emph{weight code} $\C$ is an $[m,r]_q$-code having $\ell$ nonzero weights $w_1<\ldots<w_{\ell}$, i.e. if the sequence $(A_1,\ldots,A_m)$ have exactly $\ell$ nonzero entries. If $\ell\leq r$ we say that the code has \emph{few weights}.
Much of the focus on linear codes to date has been on codes with few weights, especially on two and three-weight codes, for their applications in secret sharing \cite{DingDing}, authentication codes \cite{aa} and their connections with association schemes \cite{CK1986,CG1984} and with graphs \cite{ShiSo}.

A matrix in $\F_{q}^{m\times m}$ is said to be a \emph{monomial matrix} if it has exactly one nonzero entry in each row and column. Two $[m,r,d]_q$-codes $\C_1$ and $\C_2$ are said to be (monomially) \emph{equivalent} if $\C_2=\C_1 M=\{\mathbf{c}M \colon \mathbf{c} \in \C_1\}$ for some monomial matrix $M \in \F_{q}^{m \times m}$. Note that any monomial matrix $M \in \F_q^{m \times m}$ is the product $M=PD$ where $P \in \F_q^{m \times m}$ is a permutation matrix and $D \in \F_q^{m\times m}$ is a invertible diagonal matrix. Then, the linear codes $\mathcal{C}_1$ and $\mathcal{C}_2$ with generator matrix $G_1$ and $G_2$ respectively, are equivalent if and only if there exist $S \in \mathrm{GL}_r(q)$, $P$ permutation matrix and $D$ non diagonal matrix such that $G_2=SG_1PD$.   

Denote $[m]=\{1,\ldots,m\}$.
Correspondingly, two $[m,r,d]_q$-codes $\C_1$ and $\C_2$ are equivalent if and only if there exists a permutation $\theta:[m] \rightarrow [m]$ and there exist $\beta_1,\ldots,\beta_m \in \F_q^*$ such that
\[
\C_2=\{(\beta_1 x_{\theta(1)},\ldots,\beta_m x_{\theta(m)}) \colon (x_1,\ldots,x_m) \in \C_1\}.
\]

The set of equivalence classes of non-degenerate $[m,r,d]_q$--codes will be denoted by $\classcode$.

A {\em projective $[m, r, d]_q$--system } $(\mathcal{P},\mathrm{m})$ is a finite multiset, where $\mathcal{P} \subseteq \mathrm{PG}(r - 1, q)$ is a set of points  not
all of which lie in a hyperplane, and $\mathrm{m}: \PG(r-1,q) \rightarrow \mathbb{N}$ is the multiplicity function, with $\mathrm{m}(P)>0$ if and only if $P \in \mathcal{P}$ and $\sum_{P \in \mathcal{P}}\mathrm{m}(P)=m$. The parameter $d$ is defined by
\[m - d = \max\left\{\sum_{P \in H} \mathrm{m}(P): H \subseteq \PG(r-1,q), \dim(H)=r-2\right\},\]
see  e.g.\ \cite{land00,TVN}.

Two projective $[m,r,d]_q$--systems $(\mathcal{P},\mathrm{m})$ and $(\mathcal{P}',\mathrm{m}')$  are \emph{equivalent} if there exists a projective isomorphism $\phi \in \mathrm{PGL}(r,q)$ mapping $\mathcal{P}$ to $\mathcal{P}'$ that preserves the multiplicities of the points, i.e.
$\mathrm{m}(P)=\mathrm{m}'(\phi(P))$ for every $P \in \PG(r-1,q)$. The set of all equivalence classes of projective $[m,r,d]_q$ systems will be denoted by $\mathcal{P}[m,r,d]_q$.

There exists a 1-to-1 correspondence between $\classcode$ and $\classsystem$. This correspondence can be formalized by two maps
\[
\Phi:\classcode \longrightarrow \classsystem,
\]

\[
\Psi: \classsystem \longrightarrow \classcode
\]
that are defined as follows.

For a given equivalence class $[\C]$ of non-degenerate $[m,r,d]_q$--codes, consider a generator matrix $G \in \F_{q}^{r \times m}$ of one of the codes in $[\C]$. Let $\mathbf{g}_1,\ldots,\mathbf{g}_m$ be the columns of $G$ and define the set $\mathcal{P}=\{\langle \mathbf{g}_1\rangle_{\fqn},\ldots,\langle\mathbf{g}_m\rangle_{\fqn}\} \subseteq \PG(r-1,q)$. Moreover, define the multiplicity function $\mathrm{m}$ as
\[
\mathrm{m}(P)=\lvert \{i: P=\langle\mathbf{g}_i\rangle_{\fqn}\} \rvert.
\]
Then, the map $\Phi$ is defined as $\Phi([\C])=[(\mathcal{P},\mathrm{m})]$. On the other hand, for a given equivalence class $[(\mathcal{P},\mathrm{m})]$ of projective $[m,r,d]_q$--systems we construct a matrix $G$ by taking as columns a representative of each point $P_i$ in $\mathcal{P}$, counted with multiplicity $\mathrm{m}(P_i)$. Let $\C=\mathrm{rowsp}(G)$, where $\mathrm{rowsp}(G)$ is the space generated by the rows of $G$. We say that $\C$ is the code \emph{associated} with $(\mathcal{P},\mathrm{m})$. We then set $\Psi([\mathcal{P},\mathrm{m}])=[\mathrm{rowsp}(G)]$.
One can easily check the following result.

\begin{theorem} \label{th:correspondenceps}
$\Psi$ and $\Phi$ are well-posed and are the inverse of each other. In particular, there is a one-to-one correspondence between $\classcode$ and $\classsystem$.
\end{theorem}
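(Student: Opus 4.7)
The strategy is to verify, in turn, well-posedness of each map and then that they are mutual inverses; the equivalence relations on both sides are exactly designed to make the natural assignments canonical, so the argument is really bookkeeping.

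First I would check that $\Phi$ does not depend on the representative. Two generator matrices of the same code differ by left multiplication by some $S\in\mathrm{GL}_r(q)$; this corresponds to a projectivity $\phi\in\mathrm{PGL}(r,q)$ applied to the columns, hence preserves both the underlying set $\mathcal{P}$ (up to $\mathrm{PGL}$-equivalence) and the multiplicity function. If instead we pick a generator matrix of a monomially equivalent code, by the characterization recalled just before the theorem we have $G_2=SG_1PD$, so the columns of $G_2$ are $S$-images of the columns of $G_1$ rescaled and reordered: projectively, we only permute the representatives (leaving the multiset invariant) and then apply a projectivity. The dimension $r$ equals the rank of the column vectors, i.e.\ the dimension of the span of $\mathcal{P}$, so $\mathcal{P}$ is not contained in a hyperplane exactly when $\C$ is non-degenerate. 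For the minimum distance, the key observation is
\[
w(\mathbf{x}G)=m-\sum_{P\in H_{\mathbf{x}}}\mathrm{m}(P),
\]
where $H_{\mathbf{x}}=\{\langle\mathbf{v}\rangle_{\fq}:\mathbf{x}\cdot\mathbf{v}=0\}$ is the hyperplane dual to $\mathbf{x}$; taking the minimum over $\mathbf{0}\neq\mathbf{x}\in\F_q^r$ recovers exactly the defining equation of $d$ in the projective system, so the parameter $d$ is also preserved.

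Next I would verify that $\Psi$ is well-posed. If $(\mathcal{P},\mathrm{m})$ and $(\mathcal{P}',\mathrm{m}')$ are equivalent through $\phi\in\mathrm{PGL}(r,q)$, choose representatives for the points and build generator matrices $G$ and $G'$. Then $G'$ is obtained from $G$ by applying $\phi$ (a left multiplication by some $S\in\mathrm{GL}_r(q)$), by permuting the columns according to any reordering of the multiset (right multiplication by a permutation matrix), and by rescaling each chosen representative (right multiplication by a diagonal matrix); hence $\mathrm{rowsp}(G)$ and $\mathrm{rowsp}(G')$ are monomially equivalent. The non-degeneracy hypothesis ensures $G$ has rank $r$, so $\Psi$ outputs a code of dimension $r$, and the weight-to-hyperplane formula above shows the distance is the claimed $d$.

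Finally I would verify the two compositions are the identity. Given $[\C]$ with generator matrix $G$, the matrix produced by $\Psi\circ\Phi$ differs from $G$ only by reordering columns and rescaling individual columns, i.e.\ by right multiplication by a monomial matrix, so the resulting code is monomially equivalent to $\C$. Conversely, given $[(\mathcal{P},\mathrm{m})]$, the multiset of projective columns of any generator matrix of $\Psi([(\mathcal{P},\mathrm{m})])$ is, by construction, exactly $(\mathcal{P},\mathrm{m})$, hence $\Phi\circ\Psi$ is the identity on $\classsystem$.

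The only mildly subtle point, and the one I would be most careful about, is the direction $\Psi\circ\Phi=\mathrm{id}$: it requires the observation that any two choices of column representatives from the same multiset of projective points differ by a monomial transformation, which is precisely the equivalence used on the code side. The rest is a direct translation between linear-algebraic and projective data.
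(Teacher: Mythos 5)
Your proof is correct. Note that the paper itself offers no argument for this theorem — it is introduced with ``One can easily check the following result'' — so there is nothing to compare against except the standard verification, which is exactly what you supply: well-posedness of $\Phi$ via $G_2=SG_1PD$, well-posedness of $\Psi$ via the choice of representatives, and the observation that re-choosing column representatives of a fixed multiset of points is precisely a monomial transformation. The weight-to-hyperplane identity you use to match the parameter $d$ is the same one the paper records immediately after the theorem.

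One small imprecision worth fixing: you write that ``$\mathcal{P}$ is not contained in a hyperplane exactly when $\C$ is non-degenerate.'' In the paper's terminology these are two different conditions. Hamming non-degeneracy (no coordinate identically zero) corresponds to every column of $G$ being nonzero, which is what makes each $\langle\mathbf{g}_i\rangle$ a well-defined point of $\PG(r-1,q)$; the condition that $\mathcal{P}$ spans, i.e.\ is not contained in a hyperplane, corresponds instead to $G$ having rank $r$, which is automatic for a generator matrix of an $r$-dimensional code. Both facts are needed and both are true, so this is a mislabeling rather than a gap, but the two roles should not be conflated.
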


This correspondence gives also information on the weight distribution of the code associate with a projective system. More precisely, let $\C$ be a non-degenerate $[m,r,d]_q$-code. Let $G \in \F_q^{r \times m}$ be a generator matrix of $\C$. Let $\mathbf{g}_i$, for $i=1,\ldots,m$ be the $i$-th column of $G$. The Hamming weight of a codeword $\mathbf{v}G \in \C$ is
\[
w(\mathbf{v}G)=m-\lvert \{i:\mathbf{v} \cdot \mathbf{g}_i =0\} \rvert 
\]
where $\mathbf{v}^{\perp}=\{\mathbf{z} \colon \mathbf{z} \cdot \mathbf{v}=0 \}$ and $\mathbf{u} \cdot \mathbf{v}=\sum_{i=1}^ku_iv_i$, if $\mathbf{v}=(v_1,\ldots,v_k)$ and $\mathbf{u}=(u_1,\ldots,u_k)$. Let $(\mathcal{P},\mathrm{m})$ be the multiset where $\mathcal{P}=\{[\mathbf{g}_1],\ldots,[\mathbf{g}_m]\} \subseteq \PG(r-1,q)$ and with multiplicity function $\mathrm{m}$ as
\[
\mathrm{m}(P)=\lvert \{i: P=[\mathbf{g}_i]\} \rvert.
\]
We have that the codeword ${\mathbf{v}}G$ has weight $w$ if and only if
the projective hyperplane
\[v_1x_1 + v_2x_2 + \cdots + v_rx_r = 0\]
contains $m  - w$  points of $(\mathcal P, \mathrm{m})$.
So, the number of distinct nonzero weights of $\C$ corresponds to the distinct sizes of the intersections of $({\mathcal P},\mathrm{m})$ with all the hyperplanes.
See also \cite{TVN}.

\section{Two pointsets in $\mathrm{PG}(2,q^n)$ from linear sets in $\mathrm{PG}(1,q^n)$}\label{sec:2constr}

Let $(0\le )m_1 < \cdots < m_s (\le q^{r-1}+q^{r-2} \cdots + q +1)$ be $s$ non--negative integers, a set $\mathcal K$ of points of $\mathrm{PG}(r , q)$ is of  {\em type} $(m_1, \ldots, m_s)$ if  $\vert \pi\cap {\mathcal K}\vert \in \{m_1, \ldots, m_s\}$ for every hyperplane $\pi$ of $\mathrm{PG}(r, q)$ and each of the integers $m_i$ (called {\em intersection numbers}) occurs as the size of the intersection of $\mathcal K$ with a hyperplane of $\mathrm{PG}(r, q)$. 
In this section,  we are going to construct   sets of points of $\mathrm{PG}(2, q^n)$ with few intersection numbers with respect to the lines associated with $\mathbb{F}_q$--linear sets  arising from $q$-polynomials and to describe the parameters of their related codes.

Let $n$ be a positive integer and consider the projective plane $\mathrm{PG}(2, q^n)$ as the projective closure of  $\mathrm{AG}(2, q^n)$ via the line $\ell_\infty= \mathrm{PG}(T, \F_{q^n})$, where $T=\langle (1,0,0),(0,1,0) \rangle_{\F_{q^n}}$.
Let $f(x)$ be any $q$-polynomial in $\mathbb{F}_{q^n}[x]$, then
\[ \mathrm{Im}\left(\frac{f(x)}{x}\right)=\left\{\frac{f(x)-f(y)}{x-y} \colon x,y \in \fqn, x \ne y \right\}, \]
and hence $\mathrm{Im}\left(\frac{f(x)}{x}\right)$ is the set of directions determined by the graph of $f$, that is the set  $$\mathcal{G}_f=\{\langle (x,f(x),1)\rangle_{\fqn} \colon x \in \fqn\}\subseteq \mathrm{AG}(2,q^n).$$
  Also
  $$\mathcal{D}_f=\{\langle (x,f(x),0)\rangle_{\fqn} \colon x \in \fqn^*\}\subseteq \ell_\infty$$ 
is an $\fq$-linear  set of rank $n$ in $\mathrm{PG}(1,q^n)$ and  
$$|\mathcal{D}_f|=\left|\mathrm{Im}\left(\frac{f(x)}{x}\right)\right|.$$  In the case in which $f(x)$ is a $q$-polynomial having $\F_q$ as maximum field of linearity, then the minimum weight of the weight distribution of $\mathcal{D}_f$ is $1$ (see \cite[Proposition 2.3]{CsMP}), and  by the results in \cite{Ball,BBBSSz}, we have the following bounds :
\begin{equation}\label{eq:size}
q^{n-1}+1 \leq \left|\mathrm{Im}\left(\frac{f(x)}x\right)\right|\leq \frac{q^n-1}{q-1}.
\end{equation}
If $\mathcal{D}_f$ is a scattered $\fq$-linear set in $\mathrm{PG}(1,q^n)$, then $|\mathrm{Im}(f(x)/x)|= \frac{q^n-1}{q-1}$, whereas if $f(x)=\mathrm{Tr}_{q^n/q}(x)$ or $f(x)$ is as  in Theorem 5.2 and Corollary 5.3 of \cite{NPSZ2021} then $|\mathrm{Im}(f(x)/x)|= q^{n-1}+1$. So, both the bounds in \eqref{eq:size} are sharp.

Starting from $f(x)$ we can construct two pointsets in $\mathrm{PG}(2,q^n)$ for which we can control the intersection numbers once the weight distribution of $\mathcal{D}_f\subseteq \mathrm{PG}(1,q^n)$ is known:

\begin{enumerate}
    \item $\mathcal{B}_f=\mathcal{G}_f \cup \mathcal{D}_f$;
    \item $\mathcal{C}_f=\mathcal{G}_f \cup (\ell_{\infty} \setminus \mathcal{D}_f)$.
\end{enumerate}

\begin{theorem}\label{th:setfew}
Let $f(x)$ be a $q$-polynomial in $\fqn[x]$ with maximum field of linearity $\F_q$ and suppose that 
\[\mathcal{D}_{f}=\{\langle (x,f(x),0) \rangle_{\fqn} \colon x \in \fqn^*\} \subseteq \ell_{\infty}\] 
has weight distribution $(1,i_2,\ldots,i_t)$. 
Then the following cases occur for $B_f$:
\begin{itemize}
\item [(i)] if $i_t <n-1$, then $\mathcal{B}_f$ is of type $(1,q+1, q^{i_2}+1, \ldots, q^{i_t}+1, \vert \mathcal{D}_f\vert)$, and $\mathcal{B}_f$ admits $t+2$ intersection numbers;
\item [(ii)] if $i_t=n-1$, i.e. the weight distribution of $\mathcal{D}_f$ is $(1,n-1)$, then either $n>2$ and $\mathcal{B}_f$ is of type $(1,q+1,q^{n-1}+1)$, or $n=2$ and $\mathcal{B}_f$ is of type $(1,q+1)$, in both the cases $\mathcal{B}_f$ is $\mathrm{P\Gamma L}(3,q^n)$-equivalent to $\mathcal{B}_{\mathrm{Tr}}$, where $\mathrm{Tr}(x)=\mathrm{Tr}_{q^n/q}(x)$.
\end{itemize}
The following cases occur for $\mathcal{C}_f$:
\begin{itemize}
    \item [(j)] if $q>2$, then $\mathcal{C}_f$ is of type $(0, 2,q, q^{i_2}, \ldots, q^{i_t}, q^{n}+1-\vert \mathcal{D}_f\vert)$, and $\mathcal{C}_f$ admits $t+3$ intersection numbers;
    \item [(jj)] if $q=2$ and $t>2$, then $\mathcal{C}_f$ is of type $(0,q, q^{i_2}, \ldots, q^{i_t}, q^{n}+1-\vert \mathcal{D}_f\vert)$, and $\mathcal{C}_f$ admits $t+2$ intersection numbers;
    \item [(jjj)] if $q=2$, $t=2$ and the weight distribution of $\mathcal{D}_f$ is $(1,i)$, then $\mathcal{C}_f$ is of type $(0,2,2^i)$ if $\mathcal{D}_f$ is an $i$-club in $\mathrm{PG}(1,2^n)$ otherwise $\mathcal{C}_f$ is of type $(0,2,2^i,2^n+1-\vert \mathcal{D}_f\vert)$;
    \item [(jv)] if $q=2$ and $t=1$ then $\mathcal{C}_f$ is of type $(0,q)$ and $\mathcal{D}_f$ is a scattered linear set in $\mathrm{PG}(1,2^n)$;
\end{itemize}
Also, if  $i_t<n-1$ then the number of lines meeting $\mathcal{B}_f$ in $q^{i_j}+1$ points is $q^{n-i_j} N_{i_j}$, while if $q>2$ then the number of lines meeting $\mathcal{C}_f$ in $q^{i_j}$ points is $q^{n-i_j} N_{i_j}$, where $N_{i_j}$ is the frequency of the weight $i_j$.
\end{theorem}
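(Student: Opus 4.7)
The plan is to reduce both assertions to a single local calculation on lines. Any line $\ell \ne \ell_\infty$ has equation $ax + by + cz = 0$ with $(a,b) \ne (0,0)$; its point at infinity is $P_\infty = \langle(-b,a,0)\rangle_{\fqn}$, and its affine points in $\mathcal{G}_f$ are in bijection with solutions $x \in \fqn$ of the $\fq$-linear equation $\varphi_{a,b}(x) = -c$, where $\varphi_{a,b}(x) := ax + bf(x)$. Evaluating the definition of weight against $U_f = \{(x,f(x)):x \in \fqn\}$ yields $w_{\mathcal{D}_f}(P_\infty) = \dim_{\fq} \ker \varphi_{a,b}$. Hence $|\ell \cap \mathcal{G}_f| \in \{0, q^{w}\}$ with $w = w_{\mathcal{D}_f}(P_\infty)$; when $P_\infty \notin \mathcal{D}_f$ the map $\varphi_{a,b}$ is bijective and $|\ell \cap \mathcal{G}_f| = 1$. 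The remainder is bookkeeping: combine this with $\ell \cap \mathcal{D}_f$ for $\mathcal{B}_f$ and with $\ell \cap (\ell_\infty \setminus \mathcal{D}_f)$ for $\mathcal{C}_f$, and treat $\ell_\infty$ separately.

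For $\mathcal{B}_f$, this produces intersection sizes $\{1\} \cup \{q^{i_j}+1 : 1 \le j \le t\}$ on lines $\ne \ell_\infty$ and $|\mathcal{D}_f|$ on $\ell_\infty$. In case (i), \eqref{eq:size} together with $i_t \le n-2$ gives $|\mathcal{D}_f| \ge q^{n-1}+1 > q^{i_t}+1$, producing exactly $t+2$ distinct values. In case (ii), \eqref{eq:pointsum} constrains any weight $\ne i_t = n-1$ to equal $1$, so the distribution is $(1, n-1)$, collapsing to $(1)$ when $n=2$; for $n > 2$ a dimension count inside $U_f$ (two weight-$(n-1)$ points yield two $(n-1)$-subspaces of the $n$-dimensional $U_f$ intersecting trivially, forcing $n \le 2$) shows $\mathcal{D}_f$ is an $(n-1)$-club with $|\mathcal{D}_f| = q^{n-1}+1$ by \eqref{eq:pesicard}--\eqref{eq:pesivett}, coinciding with $q^{i_t}+1$. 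For the equivalence $\mathcal{B}_f \sim \mathcal{B}_{\mathrm{Tr}}$, I would invoke the classification of $(n-1)$-clubs (respectively, the fact that all scattered rank-$2$ $\fq$-linear sets are equivalent when $n=2$), which places $U_f$ in the same $\GammaL(2,q^n)$-orbit as $U_{\mathrm{Tr}}$; any such equivalence $\phi$ lifts to the collineation $\widetilde\phi(x, y, z) := (\phi(x, y), z^\sigma)$ of $\PG(2, q^n)$, with $\sigma$ the Frobenius twist of $\phi$, which fixes $\ell_\infty$ and sends $\mathcal{G}_f$ to $\mathcal{G}_{\mathrm{Tr}}$ and $\mathcal{D}_f$ to $\mathcal{D}_{\mathrm{Tr}}$.

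For $\mathcal{C}_f$, the case split gives $|\ell \cap \mathcal{C}_f| = 2$ when $P_\infty \notin \mathcal{D}_f$, $|\ell \cap \mathcal{C}_f| \in \{0, q^{i_j}\}$ when $P_\infty$ has weight $i_j$, and $|\ell_\infty \cap \mathcal{C}_f| = q^n + 1 - |\mathcal{D}_f|$. In case (j), $q > 2$ separates $2$ from each $q^{i_j}$, and \eqref{eq:card} yields $q^n + 1 - |\mathcal{D}_f| > q^{n-1} \ge q^{i_t}$, so all $t+3$ values are distinct. For $q = 2$, where $2 = q^{i_1}$, the central tool is the identity
\begin{equation*}
2^n + 1 - |\mathcal{D}_f| = 2 + \sum_{k=2}^{t} N_{i_k}(2^{i_k} - 2),
\end{equation*}
obtained by subtracting \eqref{eq:pesicard} from \eqref{eq:pesivett}. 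In case (jj), $t > 2$ forces at least two positive summands, with a lower bound $(2^{i_2}-2) + (2^{i_t}-2) \ge 2^{i_t}$, placing the left-hand side strictly above $2^{i_t}$ and yielding $t+2$ distinct values. In case (jjj), the identity reduces to $2^n+1-|\mathcal{D}_f| = 2 + N_i(2^i - 2)$, equal to $2^i$ precisely when $N_i = 1$, i.e.\ when $\mathcal{D}_f$ is an $i$-club. In case (jv), scatteredness gives $|\mathcal{D}_f| = 2^n - 1$ and so $q^n + 1 - |\mathcal{D}_f| = 2 = q$, leaving only $\{0, q\}$.

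Finally, to count the lines, I would fix a weight-$i_j$ point $P = \langle(-b_0, a_0, 0)\rangle_{\fqn}$ of $\mathcal{D}_f$. The $q^n$ affine lines through $P$ are parameterized by $c \in \fqn$ via $\ell_c : a_0 x + b_0 y + cz = 0$, and $\ell_c$ meets $\mathcal{G}_f$ in $q^{i_j}$ points exactly when $-c \in \mathrm{Im}(\varphi_{a_0, b_0})$, a set of size $q^{n-i_j}$. Since each affine line has a unique point at infinity, different weight-$i_j$ points yield disjoint pencils, so summing over the $N_{i_j}$ points gives $q^{n-i_j} N_{i_j}$ lines meeting $\mathcal{B}_f$ (resp.\ $\mathcal{C}_f$) in $q^{i_j}+1$ (resp.\ $q^{i_j}$) points, the hypothesis $i_t < n-1$ (resp.\ $q > 2$) ensuring that $\ell_\infty$ contributes a different intersection size and does not inflate the count. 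I expect the main obstacles to be the careful $q = 2$ disambiguation for $\mathcal{C}_f$ via the boxed identity and the invocation of the classification of $(n-1)$-clubs needed to secure $\mathcal{B}_f \sim \mathcal{B}_{\mathrm{Tr}}$ in case (ii).
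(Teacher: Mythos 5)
Your proposal is correct and follows essentially the same route as the paper: the same local observation that a line meets $\mathcal{G}_f$ in $0$ or $q^{w}$ points according to the weight $w$ of its point at infinity, the same use of \eqref{eq:size}, \eqref{eq:pesicard} and \eqref{eq:pesivett} to separate (or identify) the intersection number $q^n+1-|\mathcal{D}_f|$ in the $q=2$ cases, and an appeal to the known classification of $(n-1)$-clubs/minimal R\'edei blocking sets for the equivalence with $\mathcal{B}_{\mathrm{Tr}}$ in case (ii). Your write-up is in fact more explicit than the paper's (the linearized equation $\varphi_{a,b}(x)=-c$ and the identity $2^n+1-|\mathcal{D}_f|=2+\sum_{k\ge 2}N_{i_k}(2^{i_k}-2)$ are only implicit there), but the underlying argument is the same.
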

\begin{proof}
Let $\ell$ be any line in $\mathrm{PG}(2,q^n)$.
If $\ell$ meets $\ell_\infty$ in a point of  $\mathcal{D}_f$ of weight $j\in \{1,i_2,\ldots,i_t\}$ then $\ell$ meets $\mathcal{G}_f$ in exactly $q^j$ points or in zero points. If $\ell$ meets $\ell_\infty$ in a point outside $\mathcal{D}_f$ then $\ell$ meets $\mathcal{G}_f$ in exactly one point.   
So, the intersection numbers of $\mathcal{B}_f$ and $\mathcal{C}_f$ belong to 
\[ I_{\mathcal{B}_f}=\{1,q+1, q^{i_2}+1, \ldots, q^{i_t}+1, \vert \mathcal{D}_f\vert\} \]
and
\[ I_{\mathcal{C}_f}=\{0,2,q, q^{i_2}, \ldots, q^{i_t}, q^{n}+1-\vert \mathcal{D}_f\vert\} \]
and all the values occur.
Since $1,i_2,\ldots,i_t$ are distinct and $|\mathcal{D}_f|\geq q^{n-1}$, if $i_t<n-1$, then (i) follows, whereas if $i_t=n-1$, then by \eqref{eq:pointsum} there is one point of weight $n-1$ and all the others have weight one, so (ii) easily follows also by applying \cite[Theorem 5]{LunPol2000}.
Now, if $q^n+1-|\mathcal{D}_f|=q^j$ where $j \in \{1,i_2,\ldots,i_t\}$ then by \eqref{eq:pesicard}
\[ |\mathcal{D}_f|=q^n-q^j+1=N_1+\ldots+N_{i_t}, \]
and by \eqref{eq:pesivett} $\sum_{\ell=1}^t (q^{i_\ell}-1) N_\ell=q^n-1$, so that
\[ (q-2)N_1+\ldots+(q^{i_t}-2)N_{i_t}=q^j-2, \]
and this occurs only if $q=2$ and $t=2$, i.e. $N_j=1$ and hence $\mathcal{D}_f$ is a $j$-club. 
So (j), (jj), (jjj) and (jv) are verified.
\end{proof} 

\begin{remark}
It is possible to determine for each intersection number, the number of lines with that intersection numbers. Corollary \ref{cor:connectcodes} gives these numbers in terms of weights of codewords of the associate linear codes.
\end{remark}

By Theorem \ref{th:setfew} $\fq$-linear sets of the projective line $\PG(1,q^n)$  with {\it short} weight distribution produce  pointsets of $\PG(2,q^n)$ with {\it few} intersection numbers. For instance, starting from  scattered linear sets the previous construction produces well known pointsets in $\PG(2,q^n)$ described in the following corollary (see Table \ref{scattpoly} for the list of known $f(x)$ such that $\mathcal{D}_f$ is scattered, cfr.\ Appendix).
\begin{corollary}\label{cor:scatt}
Let $f(x)$ be a $q$-polynomial in $\fqn[x]$ and suppose that $\mathcal{D}_{f} \subseteq \mathrm{PG}(1,q^n)$ is a scattered $\fq$-linear set in $\mathrm{PG}(1,q^n)$. Then
\begin{itemize}
    \item $\mathcal{B}_f$ is of type $(1,q+1, (q^n-1)/(q-1))$;
    \item $\mathcal{C}_f$ is of type $(0,2,q, q^{n}+1-(q^n-1)/(q-1))$ if $q\ne 2$;
    \item $\mathcal{C}_f$ is of type $(0,2)$ if $q= 2$.
\end{itemize}
\end{corollary}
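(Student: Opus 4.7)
The plan is to derive the corollary as a direct specialization of Theorem \ref{th:setfew} once we observe that $\mathcal{D}_f$ being scattered means its weight distribution reduces to the single value $1$, i.e.\ $t=1$ with $i_1=1$, and $|\mathcal{D}_f|=(q^n-1)/(q-1)$.

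Before invoking the theorem, I would verify its standing hypothesis, namely that $\F_q$ is the maximum field of linearity of $f(x)$ (equivalently, of $\mathcal{D}_f$). If the maximum field of linearity were $\F_{q^d}$ with $d\mid n$ and $d>1$, then $\mathcal{D}_f$ would be an $\F_{q^d}$-linear set of rank $n/d$, hence of size at most $(q^n-1)/(q^d-1)$, which is strictly smaller than the scattered size $(q^n-1)/(q-1)$, a contradiction. So the hypotheses of Theorem \ref{th:setfew} are in force.

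For $\mathcal{B}_f$, with $t=1$ and $i_t=1$, case (i) of Theorem \ref{th:setfew} applies as soon as $n\geq 3$ (so that $i_t=1<n-1$), producing the type $(1,\,q+1,\,|\mathcal{D}_f|)=(1,\,q+1,\,(q^n-1)/(q-1))$. For $n=2$ we fall in case (ii), which gives $\mathcal{B}_f$ of type $(1,q+1)$; this is consistent with the statement since $(q^n-1)/(q-1)=q+1$ when $n=2$. For $\mathcal{C}_f$, if $q>2$ we apply case (j) with $t=1$, obtaining directly the type $(0,\,2,\,q,\,q^n+1-|\mathcal{D}_f|)$. If $q=2$, then $t=1$ puts us in case (jv) of the theorem, which gives type $(0,q)=(0,2)$; one may also notice that setting $q=2$ in case (j) would yield $q^n+1-|\mathcal{D}_f|=2^n+1-(2^n-1)=2$, so the potential fourth intersection number collapses.

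There is essentially no obstacle beyond the careful bookkeeping: the corollary is an immediate specialization, and the only delicate point is to treat separately $n=2$ from $n\geq 3$ for $\mathcal{B}_f$ (because the case split in Theorem \ref{th:setfew} is governed by whether $i_t<n-1$), and to observe that for $q=2$ the last intersection number of case (j) coincides with $2$ and so disappears from the type, matching the dichotomy between (j) and (jv).
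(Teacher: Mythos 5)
Your proof is correct and follows the same route as the paper, which presents the corollary as an immediate specialization of Theorem \ref{th:setfew} with $t=1$, $i_1=1$ and $\vert\mathcal{D}_f\vert=(q^n-1)/(q-1)$. Your two extra checks --- that scatteredness forces $\F_q$ to be the maximum field of linearity (so the theorem's hypothesis holds), and that for $n=2$ the intersection number $(q^2-1)/(q-1)=q+1$ collapses into $q+1$ so case (ii) still matches the stated type --- are details the paper leaves implicit, and you handle them correctly.
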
 In the case $q=2$ of Corollary \ref{cor:scatt}, $\mathcal{C}_f$ turns out to be a \emph{translation hyperoval}.

In the next  we describe some other families of   $\fq$-linear sets of the projective line admitting exactly two weights.

Applying Theorem \ref{th:setfew} to $i$-clubs we get the following pointsets of $\PG(2,q^n)$ with few intersection numbers.

\begin{corollary}\label{cor:constructionfewweight}
Let $f(x)$ be a $q$-polynomial in $\fqn[x]$ and suppose that $\mathcal{D}_{f} \subseteq \mathrm{PG}(1,q^n)$ is an $i$-club with $i<n-1$. Then
\begin{itemize}
    \item $\mathcal{B}_f$ is of type $(1,q+1,q^{i}+1, q^{n-1}+\ldots+q^i+1)$;
    \item $\mathcal{C}_f$ is of type $(0,2,q, q^{i}, q^{n}-q^{n-1}-\ldots-q^i)$ if $q\neq 2$;
    \item $\mathcal{C}_f$ is of type $(0,2,2^{i})$ if $q=2$ (\cite{DeBoeckVdV2016}).
\end{itemize}
\end{corollary}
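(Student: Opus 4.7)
The plan is to deduce Corollary \ref{cor:constructionfewweight} as a direct specialization of Theorem \ref{th:setfew} applied to an $i$-club, whose structural parameters are already pinned down by the definition. First I would record the two basic facts about an $i$-club $\mathcal{D}_f$ of rank $n$ with $i<n-1$: its weight distribution is $(1,i)$, so in the notation of Theorem \ref{th:setfew} one has $t=2$ and $i_2=i$ with frequency $N_i=1$. I would then derive its size $|\mathcal{D}_f|=q^{n-1}+q^{n-2}+\cdots+q^i+1$ from (\ref{eq:pesicard}) and (\ref{eq:pesivett}): plugging $N_i=1$ into (\ref{eq:pesivett}) and solving for $N_1$ gives $N_1=q^{n-1}+q^{n-2}+\cdots+q^i$, and then $|\mathcal{D}_f|=N_1+1$ by (\ref{eq:pesicard}).

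Next I would specialize each case of Theorem \ref{th:setfew} in turn. For $\mathcal{B}_f$, since $i_t=i<n-1$, case (i) applies and yields the type $(1,q+1,q^i+1,|\mathcal{D}_f|)$, which becomes the stated one after substituting the size formula. For $\mathcal{C}_f$ with $q>2$, case (j) gives $(0,2,q,q^i,q^n+1-|\mathcal{D}_f|)$, and the last entry simplifies to $q^n-q^{n-1}-\cdots-q^i$. For $\mathcal{C}_f$ with $q=2$, since $\mathcal{D}_f$ is by hypothesis an $i$-club of weight-distribution length $t=2$, case (jjj) of Theorem \ref{th:setfew} applies directly and the type collapses to $(0,2,2^i)$.

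There is no significant obstacle: the statement is essentially bookkeeping on the hypotheses of an already-proved theorem. The only point worth double-checking is that for $q=2$ the intersection number $q^n+1-|\mathcal{D}_f|$ coincides with $2^i$, since the telescoping $2^n-(2^{n-1}+\cdots+2^i)$ equals $2^i$. This collision is exactly the mechanism that makes case (jjj) produce a strictly shorter type list than the naive reduction of case (j) to $q=2$, and it is the observation I would make at the end to confirm internal consistency between the $q=2$ and $q>2$ statements.
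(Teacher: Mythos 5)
Your proposal is correct and matches the paper's intent exactly: the corollary is stated there without a separate proof, precisely as the specialization of Theorem \ref{th:setfew} (cases (i), (j), (jjj)) to a linear set with weight distribution $(1,i)$ and $N_i=1$, using the size $|\mathcal{D}_f|=q^{n-1}+\cdots+q^i+1$ already derived from \eqref{eq:pesicard} and \eqref{eq:pesivett} in the preliminaries. Your closing check that $2^n+1-|\mathcal{D}_f|=2^i$ for $q=2$ is the same collapse mechanism the paper builds into case (jjj).
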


When $q=2$, $\mathcal{C}_f$ is of type $(0,2, 2^{i})$ and pointsets of this type are known as \emph{KM-arcs} of type $2^i$, originally introduced in \cite{KM}.
A KM-arc $\mathcal{A}$ in $\PG(2,q^n)$ is called a \emph{translation} KM-arc if there exists a line $\ell$ of $\PG(2,q^n)$ such that the group of elations with axis $\ell$ and fixing $\mathcal{A}$ acts transitively on the points of $\mathcal{A}\setminus \ell$. 
In \cite[Theorem 2.1]{DeBoeckVdV2016} (and Corollary \ref{cor:constructionfewweight}), it was proved that the pointsets $\C_f$ provide examples of translation KM-arcs of type $2^i$ when $D_f$ is an $i$-club in $\mathrm{PG}(2,2^n)$.
The first example of KM-arc of type $2^i$ was presented in \cite{KM} and can be described as $D_{T}$, where $T(x)=\mathrm{Tr}_{q^{\ell m}/q^m}\circ\sigma (x)$; see \cite[Theorem 3.2]{DeBoeckVdV2016}.

Recently, in \cite{NPSZ2021}   $\fq$-linear sets $\mathcal{D}_f$  of the projective line $\PG(1,q^n)$ of rank $n$  with length  of the  weight distribution at most $3$ have been studied. In particular in \cite[Theorem 4.4]{NPSZ2021} it has been proved that if $1<t\leq 3$, $i_1=1$ and  $\mathcal{D}_f$ admits exactly two points of weight bigger than $1$, then  $t=2$, $n$ is even and $i_2=\frac{n}2$. Hence by  \eqref{eq:pesicard} and \eqref{eq:pesivett} the size and the frequences of the weights of $\mathcal{D}_f$ are uniquely determined. Also in the same paper two families of examples of such linear sets have been constructed (see Table \ref{2wpoly} of Appendix).

Applying Theorem \ref{th:setfew} to the examples described in Table \ref{2wpoly} we get the following pointsets of $\PG(2,q^n)$ ($n$ even) with $4$ or $5$ intersection numbers.

\begin{corollary}
Let $f(x)$ be a $q$-polynomial in $\fqn[x]$, $n$ even, and suppose that $\mathcal{D}_{f} \subseteq \mathrm{PG}(1,q^n)$ has two points of weight $n/2$ and all the other points have weight one. Then
\begin{itemize}
    \item $\mathcal{B}_f$ is of type $(1,q+1, q^{n/2}+1, q^{n-1}+\ldots+q^{n/2}-q^{n/2-1}-\ldots-q+1)$;
    \item $\mathcal{C}_f$ is of type $(0,2,q, q^{n/2}, q^{n}-q^{n-1}-\ldots-q^{n/2}+q^{n/2-1}+\ldots+q)$ if $q\ne 2$;
    \item $\mathcal{C}_f$ is of type $(0,2, 2^{n/2}, 2^{n/2+1}-2)$ if $q=2$.
\end{itemize}
\end{corollary}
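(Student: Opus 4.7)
The plan is to reduce everything to a direct application of Theorem~\ref{th:setfew} together with the two counting identities \eqref{eq:pesicard} and \eqref{eq:pesivett}; there is essentially no new geometric content, just bookkeeping for the parameters.

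First I would read off the weight data: the hypothesis says that the weight distribution of $\mathcal{D}_f$ is $(1,n/2)$ with frequency $N_{n/2}=2$. Since $n$ is even and the two weights $1$ and $n/2$ are distinct, we have $n\ge 4$, hence $i_t=n/2\le n-2<n-1$. Therefore Theorem~\ref{th:setfew}(i) applies to $\mathcal{B}_f$, yielding intersection numbers $\{1,\,q+1,\,q^{n/2}+1,\,|\mathcal{D}_f|\}$. For $\mathcal{C}_f$, case (j) applies when $q>2$. When $q=2$, since $N_{n/2}=2\neq 1$, $\mathcal{D}_f$ is not an $(n/2)$-club, so Theorem~\ref{th:setfew}(jjj) gives the type $(0,2,2^{n/2},2^n+1-|\mathcal{D}_f|)$.

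Next, I would compute $|\mathcal{D}_f|$. Plugging $N_{n/2}=2$ into \eqref{eq:pesivett} gives
\[
N_1+2\bigl(q^{n/2-1}+\cdots+q+1\bigr)=q^{n-1}+\cdots+q+1,
\]
so $N_1=q^{n-1}+\cdots+q^{n/2}-q^{n/2-1}-\cdots-q-1$, and by \eqref{eq:pesicard}
\[
|\mathcal{D}_f|=N_1+N_{n/2}=q^{n-1}+\cdots+q^{n/2}-q^{n/2-1}-\cdots-q+1,
\]
which is exactly the last intersection number listed for $\mathcal{B}_f$. For $\mathcal{C}_f$ with $q>2$ the last intersection number is $q^n+1-|\mathcal{D}_f|=q^n-q^{n-1}-\cdots-q^{n/2}+q^{n/2-1}+\cdots+q$, again matching the statement. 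Finally, for $q=2$, a short geometric-series calculation gives $2^{n-1}+\cdots+2^{n/2}=2^n-2^{n/2}$ and $2^{n/2-1}+\cdots+2=2^{n/2}-2$, whence $|\mathcal{D}_f|=2^n-2\cdot 2^{n/2}+3$, and therefore $2^n+1-|\mathcal{D}_f|=2^{n/2+1}-2$, as claimed.

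There is no real obstacle here; the only thing worth double-checking is the edge case $q=2$, both to confirm that (jjj) and not (jv) applies (which is immediate since $t=2$) and to verify that $\mathcal{D}_f$ being not a club rules out the $3$-element type $(0,2,2^i)$. Once those boxes are ticked, the corollary follows by pure arithmetic from Theorem~\ref{th:setfew}.
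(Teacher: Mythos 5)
Your proposal is correct and follows exactly the route the paper intends: the corollary is presented as a direct application of Theorem~\ref{th:setfew} (cases (i), (j), (jjj)), with $|\mathcal{D}_f|$ pinned down by \eqref{eq:pesicard} and \eqref{eq:pesivett} from the frequencies $N_{n/2}=2$. Your arithmetic for $N_1$, $|\mathcal{D}_f|$, and the $q=2$ simplification to $2^{n/2+1}-2$ all check out, and the observation that $n\ge 4$ forces $i_t=n/2<n-1$ and that $N_{n/2}=2$ rules out the club case are precisely the needed verifications.
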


\section{Connections with linear codes}\label{sec:connect}

Let $f(x)$ be a $q$-polynomial. Let $N_b=|\mathcal{B}_f|$ and $N_c=|\mathcal{C}_f|$.
Denote by $\C_f^b$ the $[N_b,3]_{q^n}$-code associated with the projective $[N_b,3]_{q^n}$-system $(\mathcal{B}_f,\mathrm{m})$, where $\mathrm{m}(P)=1$ if and ony if $P \in \mathcal{B}_f$ and by $\C_f^c$ the $[N_c,3]_{q^n}$-code associated with the projective $[N_c,3]_{q^n}$-system $(\mathcal{C}_f,\mathrm{m})$, where $\mathrm{m}(P)=1$ if and only if $P \in \mathcal{C}_f$.
As consequence of Theorem \ref{th:setfew}, we can determine the weight distribution and the enumerator polynomial of the codes $\C_f^b$ and $\C_f^c$.

\begin{corollary} \label{cor:connectcodes}
Let $f(x)$ be an $q$-polynomial in $\fqn[x]$ and suppose that $\mathcal{D}_{f} \subseteq \mathrm{PG}(1,q^n)$ has weight distribution  $(1,i_2,\ldots,i_t)$ and frequencies of the weights $(N_1,N_{i_2}, \dots, N_{i_t})$.
Then
\begin{enumerate}
    \item if $i_t<n-1$, the weight distribution of $\C_f^b$ is 
    \[(N_{b} - \vert \mathcal{D}_f\vert, N_{b} - q^{i_t}-1, \ldots, N_{b} -q^{i_2}-1, N_{b} -q -1,  N_{b}- 1);\] and in this case the enumerator polynomial of $\C_f^b$ is 
    \[1+ (q^n-1)\left(z^{N_b-|\mathcal{D}_f|}+\sum_{j=1}^t N_{i_j} q^{n-i_j} z^{N_b-q^{i_j}-1}+\left(q^n\cdot (q^n+1-|\mathcal{D}_f|)+\sum_{j=1}^t N_{i_j} (q^n-q^{n-i_j})\right)z^{N_b-1}\right); \]
    \item if $i_t=n-1$ and $n>2$, the weight distribution of $\C_f^b$ is 
    \[(N_{b} - q^{n-1}-1, N_{b} -q-1, N_{b} -1)\] and in this case the enumerator polynomial of $\C_f^b$ is
    \[
    1+ (q^n-1)\left((1+q)z^{N_b-q^{n-1}-1} +q^{n-1}q^{n-1}z^{N_b-q-1} + (q^n(q^n-q^{n-1})+ q^{n-1}(q^n-q^{n-1})+q^n-q )z^{N_b-1} \right)
    \]
    
    \item $i_t=n-1$ and $n=2$, the weight distribution of $\C_f^b$ is 
    \[(N_{b}-q-1, N_{b} -1)\] and in this case the enumerator polynomial of $\C_f^b$ is
    \[
    1+ (q^n-1)\left((q^2+q+1)z^{N_b-q-1}+(q^2(q^2-q)+(q+1)(q^2-q))z^{N_b-1}\right)
    \]
    
    \item If $q>2$, the weight distribution of $\C_f^c$ is 
    \[ (N_{c} - q^{n}-1+\vert \mathcal{D}_f\vert,  N_{c} - q^{i_t}, \ldots,  N_{c} - q^{i_2}, N_{c} -q, N_{c} - 2, N_{c}); \]
  the enumerator polynomial of $\C_f^c$ is 
    \[ 1+ (q^n-1)\left(z^{N_c-q^n-1+|\mathcal{D}_f|}+\sum_{j=1}^t N_{i_j} \cdot q^{n-i_j} z^{N_c-q^{i_j}}
    + q^n\cdot (q^n+1-|\mathcal{D}_f|) z^{N_c-2}
    +\right.\] \[\left. \left(\sum_{j=1}^t N_{i_j} (q^n-q^{n-i_j})\right)z^{N_c}\right). \]
    \item If $q=2$ and $t>2$, the weight distribution of $\C_f^c$ is 
    \[ (N_{c} - q^{n}-1+\vert \mathcal{D}_f\vert,  N_{c} - q^{i_t}, \ldots,  N_{c} - q^{i_2}, N_{c} -q, N_{c}); \]
  the enumerator polynomial of $\C_f^c$ is 
    \[ 1+  (q^n-1)\left(z^{N_c-q^n-1+|\mathcal{D}_f|}+\sum_{j=2}^t N_{i_j} \cdot q^{n-i_j} z^{N_c-q^{i_j}} 
    + (q^n\cdot (q^n+1-|\mathcal{D}_f|)+ N_1 \cdot q^{n-1}) z^{N_c-q}\right)\]
\[+ (q^n-1)\left(\left(\sum_{j=1}^t N_{i_j} (q^n-q^{n-i_j})\right)z^{N_c} \right). \]
    \item If $q=2$ and the weight distribution of $\mathcal{D}_f$ is $(1,i)$ and $\mathcal{D}_f$ is an $i$-club, the weight distribution of $\C_f^c$ is 
    \[ (N_{c} - q^{i}, N_{c} -q, N_{c}); \]
  the enumerator polynomial of $\C_f^c$ is 
    \[ 1+ (q^n-1)\left((N_iq^{n-i}+1)z^{N_c-q^i}+(q^n(q^n+1-\lvert \mathcal{D}_f\rvert) +N_1q^{n-1})z^{N_c-q}+\right. \]\[\left.(N_i(q^n-q^{n-i})+N_1(q^n-q^{n-1}))z^{N_c}\right). \]
    If $q=2$ and the weight distribution of $\mathcal{D}_f$ is $(1,i)$ and $\mathcal{D}_f$ is not an $i$-club, the weight distribution of $\C_f^c$ is
    \[ (N_{c} - q^{n}-1+\vert \mathcal{D}_f\vert,  N_{c} - q^{i}, N_{c} -q, N_{c}); \]
  the enumerator polynomial of $\C_f^c$ is 
    \[ 1+  (q^n-1)\left(z^{N_c-q^n-1+|\mathcal{D}_f|}+ N_{i} \cdot q^{n-i} z^{N_c-q^{i}} 
    + (q^n\cdot (q^n+1-|\mathcal{D}_f|)+ N_1 \cdot q^{n-1}) z^{N_c-q}\right)\]
\[+\left(\sum_{j=1}^t N_{i_j} (q^n-q^{n-i_j})\right)z^{N_c}. \]
    \item If $q=2$ and $\mathcal{D}_f$ is a scattered linear set, the weight distribution of $\C_f^c$ is 
    \[ ( N_{c} -q, N_{c}); \]
  the enumerator polynomial of $\C_f^c$ is 
    \[ 1+ (q^n-1)\left(1+(q^{n+1} +(q^n-1)q^{n-1})z^{N_c-q}+(q^n-1)(q^n-q^{n-1})z^{N_c}\right). \]
\end{enumerate}
\end{corollary}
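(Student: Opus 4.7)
The proof is a direct application of the dictionary between linear codes and projective systems recalled in Section~\ref{sec:prel}, together with the intersection analysis already carried out in Theorem~\ref{th:setfew}. By the weight formula at the end of Section~\ref{sec:prel}, each nonzero codeword $\mathbf v G$ of $\C_f^b$ has Hamming weight $N_b - |\ell \cap \mathcal B_f|$, where $\ell$ is the line of $\PG(2,q^n)$ dual to $\mathbf v$ (and similarly for $\C_f^c$), and the $q^n-1$ nonzero $\fqn$-multiples of $\mathbf v$ yield the same $\ell$ and hence the same weight. Consequently, to determine the enumerator polynomials it suffices to count, for each intersection value $h$ listed in Theorem~\ref{th:setfew}, the number of lines of $\PG(2,q^n)$ meeting $\mathcal B_f$ (respectively $\mathcal C_f$) in exactly $h$ points, and to multiply each count by $q^n-1$.

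The required line counts come from a natural partition of the lines of $\PG(2,q^n)$. The unique line $\ell_\infty$ meets $\mathcal B_f$ in $|\mathcal D_f|$ points and $\mathcal C_f$ in $q^n+1-|\mathcal D_f|$ points. Every other line is affine and meets $\ell_\infty$ in a single point $P$. If $P \in \mathcal D_f$ has weight $i_j$, then by the $\fq$-linearity of $f$ the $q^n$ affine lines through $P$ split into $q^{n-i_j}$ lines meeting $\mathcal G_f$ in $q^{i_j}$ points each and $q^n - q^{n-i_j}$ lines missing $\mathcal G_f$; if instead $P \in \ell_\infty \setminus \mathcal D_f$, then each of the $q^n$ affine lines through $P$ meets $\mathcal G_f$ in exactly one point. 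Summing these contributions over the $N_{i_j}$ weight-$i_j$ points and the $q^n+1-|\mathcal D_f|$ points of $\ell_\infty \setminus \mathcal D_f$, and multiplying by $q^n-1$, one reads off the enumerators of part (1) and part (4) directly in the generic case where all of $1, q+1, q^{i_2}+1, \ldots, q^{i_t}+1, |\mathcal D_f|$ (respectively $0, 2, q, q^{i_2}, \ldots, q^{i_t}, q^n+1-|\mathcal D_f|$) are pairwise distinct.

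The remaining parts require the correct merging of these three sources whenever two or more intersection values collide, and this is the principal technical obstacle. In parts (2)-(3), the identities $N_{n-1}=1$, $N_1 = q^{n-1}$ and $|\mathcal D_f| = q^{n-1}+1$ (forced by \eqref{eq:pointsum}, \eqref{eq:pesicard}, \eqref{eq:pesivett} applied to the weight distribution $(1,n-1)$) imply that $\ell_\infty$ shares its intersection value with the lines through the unique weight-$(n-1)$ point; for $n=2$ the further identity $q = q^{n-1}$ collapses the intersection values $q+1$ and $q^{n-1}+1$ into a single one. In parts (5)-(6), the hypothesis $q=2$ identifies the intersection values $2$ and $q$ of $\mathcal C_f$, merging the weight-$1$ class (through $\mathcal D_f$) with the external class (through $\ell_\infty\setminus\mathcal D_f$); part (6) additionally uses the identity $2^n+1-|\mathcal D_f|=2^i$ valid for an $i$-club in characteristic $2$ to merge $\ell_\infty$ with the $q^{n-i}$ lines through the weight-$i$ point. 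Part (7) is the scattered-in-characteristic-$2$ degeneration in which only the weight-$1$ and external classes appear. In each case the geometric contributions computed above are simply regrouped under the coincident intersection values, and consistency against the total line count $q^{2n}+q^n+1$ together with \eqref{eq:pesivett} provides the final verification.
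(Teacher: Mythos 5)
Your proof is correct and follows essentially the same route as the paper: both translate codeword weights into line--intersection counts via the code/projective-system dictionary and then count lines by partitioning them according to the weight of their point at infinity, obtaining the counts $q^{n-i_j}N_{i_j}$ and $q^n-q^{n-i_j}$ per weight-$i_j$ point plus the contributions of $\ell_\infty$ and of the points of $\ell_\infty\setminus\mathcal{D}_f$. Your explicit treatment of how the intersection values merge in the degenerate cases (2)--(3) and (5)--(7) simply fills in what the paper leaves to ``by similar arguments, the other cases follow.''
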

\begin{proof}
Let us start by analyzing the case (1). As said in Section 2 we have that the codeword ${\mathbf{v}}G$ has weight $N_b-w$ if and only if
the projective line
\[v_1x_1 + v_2x_2 + v_3x_3 = 0\]
contains $w$  points of $\mathcal{B}_f$. So the number of codewords of $\mathcal{C}_f$ of weight $N_b-w$ coincides with the number of lines that intersect $\mathcal{B}_f$ in $w$ points.
Only the R\'edei line (the line at infinity) intersects $\mathcal{B}_f$ in $\lvert \mathcal{D}_f \rvert$ points. Let $\ell$ be any line in $\mathrm{PG}(2,q^n)$.
If $\ell$ meets $\ell_\infty$ in a point of  $\mathcal{D}_f$ of weight $j\in \{1,i_2,\ldots,i_t\}$ then $\ell$ meets $\mathcal{G}_f$ in exactly $q^j$ points or in zero points. Then there are $q^{n-j}$ lines through a point of weight $j\in \{1,i_2,\ldots,i_t\}$ that intersect $\mathcal{B}_f$ in $q^j+1$ points and hence $q^n-q^{n-j}$ lines through a point of weight $j\in \{1,i_2,\ldots,i_t\}$ that intersect $\mathcal{B}_f$ in exactly one point. The remaining ones intersect the $\mathcal{B}_f$ in one point. \\
By similar arguments, the other cases follow.
\end{proof}

\begin{remark}
In the above result we actually completely determine the pattern of the intersections between lines and the two pointsets. 
\end{remark}

\section{Examples from the known families of linearized polynomials}

In this section, we exhibit the enumerator polynomials of $\mathcal{C}_f^b$ and $\mathcal{C}_f^c$ when $f(x)$ defines an $i$-club, a scattered linear set or a linear set having two points of weight $n/2$ and all the other points of weight one in $\PG(1,q^n)$.
We will use the following notation $\theta_i=\frac{q^{i+1}-1}{q-1}$ and recall that $N_b=|\mathcal{B}_f|$ and $N_c=|\mathcal{C}_f|$.

\subsection*{$i$-clubs}
If $\mathcal{D}_f$ is an $i$-club, then $N_b=q^n+q^{n-1}+\ldots+q^i+1$ and $N_c=2q^n-q^{n-1}-\ldots-q$.
\begin{enumerate}
    \item If $i<n-1$, the weight distribution of $\C_f^b$ is 
    \[(N_{b} -(\theta_{n-1}-\theta_{i-1}) -1, N_{b} - q^{i}-1, N_{b} -q -1,  N_{b}- 1),\] and in this case the enumerator polynomial of $\C_f^b$ is 
     \[ 1+(q^{n}-1)(z^{N_b-(\theta_{n-1}-\theta_{i-1})-1}+q^{n-i} z^{N_b-q^{i}-1}+(\theta_{n-1}-\theta_{i-1})q^{n-1} z^{N_b-q-1} )\]
    \[+(q^{n}-1)(\left(q^n\cdot (q^n-\theta_{n-1} +\theta_{i-1})+ (q^n-q^{n-i})+(\theta_{n-1}-\theta_{i-1})(q^n-q^{n-1})\right)z^{N_b-1}), \]
    
    \item if $i=n-1$ and $n>2$, the weight distribution of $\C_f^b$ is 
    \[(N_{b} - q^{n-1}-1, N_{b} -q-1, N_{b} -1)\] and in this case the enumerator polynomial of $\C_f^b$ is
    \[
    1+(q^{n}-1)((1+q)z^{N_b-q^{n-1}-1} +q^{n-1}q^{n-1}z^{N_b-q-1} + (q^n(q^n-q^{n-1})+ q^{n-1}(q^n-q^{n-1})+q^n-q )z^{N_b-1}).
    \]
    \item If $q>2$, the weight distribution of $\C_f^c$ is 
    \[ (N_{c} - q^{n}-1+\vert \mathcal{D}_f\vert,  N_{c} - q^{i}, N_{c} -q, N_{c} - 2, N_{c}), \]
  the enumerator polynomial of $\C_f^c$ is 
    \[ 1+(q^n-1)(z^{N_c-q^n+ \theta_{n-1} -\theta_{i-1}}+ q^{n-i} z^{N_c-q^{i}} + (\theta_{n-1} -\theta_{i-1})q^{n-1}z^{N_c-q} \]\[
    + q^n\cdot (q^n-\theta_{n-1} +\theta_{i-1}) z^{N_c-2}
    +\left( q^n-q^{n-i}+(\theta_{n-1}-\theta_{i-1}+1)(q^n-q^{n-1})\right)z^{N_c}). \]

    \item If $q=2$, as in (7) of Corollary \ref{cor:connectcodes}.
    
    
\end{enumerate}
    
\subsection*{Scattered linear sets}
    
If $\mathcal{D}_f$ is a scattered linear set, then $N_b=\frac{q^{n+1}-1}{q-1}$ and $N_c=2q^n-q^{n-1}-\ldots-q$.
\begin{enumerate}
    \item If $n>2$, the weight distribution of $\C_f^b$ is 
    \[(N_{b} - \vert \mathcal{D}_f\vert, N_{b} -q -1,  N_{b}- 1),\] 
    and in this case the enumerator polynomial of $\C_f^b$ is 
    \[ 1 + (q^n-1)(z^{N_b-\theta_{n-1}}+\theta_{n-1}\cdot q^{n-1} z^{N_b-q-1}+\left(q^n\cdot \left(q^n+1-\theta_{n-1}\right)+\theta_{n-1} (q^n-q^{n-1})\right)z^{N_b-1}), \]
    
    \item  $n=2$, the weight distribution of $\C_f^b$ is 
    \[(N_{b}-q-1, N_{b} -1)\] and in this case the enumerator polynomial of $\C_f^b$ is
    \[
    1+(q^n-1)(\theta_2 \cdot z^{N_b-q-1}+(q^2(q^2-q)+(q+1)(q^2-q))z^{N_b-1})
    \]
    
    \item If $q>2$, the weight distribution of $\C_f^c$ is 
    \[ (N_{c} - q^{n}-1+\theta_{n-1},  N_{c} - q, N_{c} - 2, N_{c}); \]
  the enumerator polynomial of $\C_f^c$ is 
    \[ 1+(q^n-1)(z^{N_c-q^n-1+\theta_{n-1}}+\theta_{n-1} \cdot q^{n-1} z^{N_c-q} 
    + q^n\cdot (q^n+1-\theta_{n-1}) z^{N_c-2}
    +\theta_{n-1} \cdot (q^n-q^{n-1}) z^{N_c}). \]
    \item If $q=2$, as in (7) of Corollary \ref{cor:connectcodes}.

\end{enumerate}

\subsection*{Linear sets with two points of weight $n/2$ and all the others of weight one}    
If $n$ is even, $\mathcal{D}_f$ has two points of weight $n/2$ with $n>2$ and all the other points have weight one, then $N_b=q^n+q^{n-1}+\ldots+q^{n/2}-q^{n/2-1}-\ldots-q+1$ and $N_c=2q^n-q^{n-1}-\ldots-q^{n/2}+q^{n/2-1}+\ldots+q$.
    \begin{enumerate}
    \item The weight distribution of $\C_f^b$ is 
    \[(N_{b}-(\theta_{n-1}-2(\theta_{n/2-1}-1)), N_{b} - q^{n/2}-1, N_{b} -q -1,  N_{b}- 1);\] and in this case the enumerator polynomial of $\C_f^b$ is 
     {\small
    \[ 1+(q^n-1)(z^{N_{b}-(\theta_{n-1}-2(\theta_{n/2-1}-1))}+ 2 q^{n/2} z^{N_b-q^{n/2}-1} + (\theta_{n-1}-2\theta_{n/2-1})q^{n-1} z^{N_b-q-1}\]
    \[+\left(q^n\cdot (q^n+1-(\theta_{n-1}-2(\theta_{n/2-1}-1))+ 2 (q^n-q^{n/2})+\right) \]\[\left( (\theta_{n-1}-2\theta_{n/2-1})(q^n-q^{n-1}) \right)z^{N_b-1}). \]}
    
    \item If $q>2$, the weight distribution of $\C_f^c$ is 
    \[ (N_{c} - q^{n}-1+\theta_{n-1}-2(\theta_{n/2-1}-1),  N_{c} - q^{n/2},N_{c} -q, N_{c} - 2, N_{c}); \]
  the enumerator polynomial of $\C_f^c$ is 
    \[ 1+(q^n-1)(z^{(N_{c} - q^{n}-1+\theta_{n-1}-2(\theta_{n/2-1}-1)}+\]
    \[2 q^{n/2}z^{N_c-q^{n/2}}+ (\theta_{n-1}-2\theta_{n/2-1})(q^n-q^{n-1}) q^{n-1} z^{N_c-q} \]
    \[+ q^n\cdot (q^n+1-\theta_{n-1}+2) z^{N_c-2}\]
    \[+\left(2 (q^n-q^{n/2})+(\theta_{n-1}-2\theta_{n/2-1})(q^n-q^{n-1})\right)z^{N_c}). \]
    
    \item If $q=2$, as in (6) of Corollary \ref{sec:connect}.
    
\end{enumerate}

\section{Equivalence}\label{sec:equiv}

In this section we determine the relationship between $q$-polynomials defining equivalent linear codes.
To this aim recall that $q=p^h$ and also the following definition from \cite{CsMP2019}. Two $q$-polynomials $f(x)$ and $g(x)$ in $\mathcal{L}_{n,q}$ are \emph{equivalent} if the $\fq$-subspaces 
 $$U_f=\{(x,f(x))\,:\, x\in \fqn\} \,\,\mbox{and} \,\,  U_g=\{(x,g(x))\,:\, x\in \fqn\}$$ 
are equivalent under the action of the group $\Gamma {\mathrm L}(2,q^n)$, i.e.\ if there exists an element $\varphi\in\Gamma {\mathrm L}(2,q^n)$ such that $U_f^\varphi=U_g$.
Moreover, if $U_f$ and $U_g$ are equivalent under the action of $\mathrm{GL}(2,q^n)$ we say that $f(x)$ and $g(x)$ are \emph{linearly equivalent}.

Note that if $f(x)$ is a $q$-polynomial in $\mathcal{L}_{n,q}$, then the pointset  
$$\mathcal{B}_f=\mathcal{G}_f \cup \mathcal{D}_f=\{\langle(x,f(x),\alpha)\rangle_{\fqn} \, : \, x\in \fqn,\, \alpha\in \fq, (x,\alpha)\ne(0,0)\},$$ 
defined in Section 3, is an $\fq$-linear set of rank $n+1$ in  $\PG(2,q^n)$. Such a set of points is an {\it  $\fq$-linear blocking set} of the projective plane $\PG(2,q^n)$  and 
since  $|\mathcal{B}_f\setminus \ell_\infty|=|\mathcal{B}_f\setminus \mathcal{D}_f|=q^n$, $\mathcal{B}_f$ is a blocking set  {\it of R\'edei type}  having  the line $\ell_\infty$ as a  {\it R\'edei  line}.
Also every line of  $\PG(2,q^n)$ intersects $\mathcal{B}_f$ in a number of points congruent to 1 modulo $q$ and hence $\mathcal{B}_f$ has exponent at least $h$, where the {\it exponent} of $\mathcal{B}_f$ is
the maximal integer $e$  ($0 \leq e \leq hn)$ such that $|\ell \cap \mathcal{B}_f|\equiv \, 1 \, (mod \,\, p^e)$  for every line $ \ell$ in
$\PG(2, q^n)$ (see e.g. \cite[Section 3.1]{Polverino} for more details on linear blocking sets). Starting from an $\fq$-linear blocking set of R\'edei type $\mathcal{B}$ with R\'edei line $\ell$ we can consider the following set
\[ \mathcal{C}=(\mathcal{B}\setminus \ell)\cup (\ell \setminus \mathcal{B}). \]
The pointsets in $\mathrm{PG}(2,q^n)$ obtained in this way will be called \emph{co-blocking sets of R\'edei type} with R\'edei line $\ell$.
The construction $\mathcal{C}_f$ is such an example.
Following the arguments used in the proof of \cite[Theorem 5.5]{CsMP} we may state the following result.

\begin{theorem}\label{th:clasB_f}
Let $f(x)$ and $g(x)$ be $q$-polynomials in $\mathcal{L}_{n,q}$ admitting $\fq$ as maximum field of linearity.
Then $\mathcal{B}_f$ and $\mathcal{B}_g$ are $\mathrm{P\Gamma L}(3,q^n)$-equivalent if and only if the polynomials $f(x)$ and $g(x)$ are equivalent.
Also, $\mathcal{B}_f$ and $\mathcal{B}_g$ are $\mathrm{PGL}(3,q^n)$-equivalent if and only if $f(x)$ and $g(x)$ are linearly equivalent.
\end{theorem}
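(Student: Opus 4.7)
The plan is to adapt the scheme of \cite[Theorem 5.5]{CsMP} to the present three-dimensional setting. For the forward direction, given $\varphi=(M,\sigma)\in\Gamma\mathrm{L}(2,q^n)$ with $U_f^\varphi=U_g$, I would lift to
\[
\tilde\varphi=\left(\begin{pmatrix} M & \mathbf{0} \\ \mathbf{0}^{T} & 1 \end{pmatrix},\sigma\right)\in\Gamma\mathrm{L}(3,q^n).
\]
Since $\sigma$ fixes $\fq$ pointwise, $\tilde\varphi$ preserves the vector $(0,0,1)$ over $\fq$ and agrees with $\varphi$ on the plane $x_3=0$; consequently it sends $W_f:=U_f\oplus\langle(0,0,1)\rangle_{\fq}$ to $W_g$, and hence $\mathcal{B}_f=L_{W_f}$ to $\mathcal{B}_g=L_{W_g}$. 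When $\sigma=\mathrm{id}$, this is a $\mathrm{PGL}(3,q^n)$-equivalence.

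For the converse, assume $\Phi\in\mathrm{P\Gamma L}(3,q^n)$ satisfies $\Phi(\mathcal{B}_f)=\mathcal{B}_g$. The key step is to show that $\Phi$ stabilizes the R\'edei line $\ell_\infty$. When $i_t<n-1$, the bound \eqref{eq:size} gives $\lvert\mathcal{D}_f\rvert\geq q^{n-1}+1>q^{i_t}+1$, so by Theorem \ref{th:setfew}(i) the line $\ell_\infty$ is the \emph{unique} line meeting $\mathcal{B}_f$ in the maximal number of points, and similarly for $\mathcal{B}_g$; thus $\Phi(\ell_\infty)=\ell_\infty$. The exceptional case $i_t=n-1$ is handled separately: Theorem \ref{th:setfew}(ii) together with \cite[Theorem 5]{LunPol2000} force both $f$ and $g$ to be equivalent to $\mathrm{Tr}_{q^n/q}$, so the statement follows directly.

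Once $\ell_\infty$ is stabilized, lifting $\Phi$ to $\tilde\Phi\in\Gamma\mathrm{L}(3,q^n)$ yields (up to a nonzero scalar) a matrix of the block form
\[
A=\begin{pmatrix} M & \mathbf{v} \\ \mathbf{0}^{T} & 1 \end{pmatrix},\qquad M\in\mathrm{GL}(2,q^n),\;\mathbf{v}\in\fqn^{2},
\]
combined with a field automorphism $\sigma$. The affine point $\langle(0,0,1)\rangle$ belongs to $\mathcal{G}_f$ and must map to a point of $\mathcal{G}_g$, which forces $\mathbf{v}=(a,g(a))^{T}$ for some $a\in\fqn$. Writing $A=T\cdot\mathrm{diag}(M,1)$, where $T$ is the translation matrix having $\mathbf{v}$ as its third column and the identity elsewhere, $T$ stabilizes $\mathcal{B}_g$; hence replacing $\tilde\Phi$ by $T^{-1}\tilde\Phi$ we may assume $\mathbf{v}=\mathbf{0}$. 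The resulting block-diagonal map restricts on $\ell_\infty$ to $(M,\sigma)\in\Gamma\mathrm{L}(2,q^n)$, and the equality $\tilde\Phi(\mathcal{G}_f)=\mathcal{G}_g$ forces $\{M\cdot(x,f(x))^{\sigma}:x\in\fqn\}=U_g$, i.e.\ $U_f^{(M,\sigma)}=U_g$. If originally $\sigma=\mathrm{id}$, this is a $\mathrm{GL}(2,q^n)$-equivalence, yielding linear equivalence of $f$ and $g$.

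The main obstacle is the R\'edei-line preservation: in the generic case it is a one-line count from Theorem \ref{th:setfew}(i), but the exceptional case $i_t=n-1$, where several lines may share the maximal intersection size $q^{n-1}+1$, requires the separate appeal to the classification of $q$-polynomials with weight distribution $(1,n-1)$ in \cite[Theorem 5]{LunPol2000} to bypass a purely geometric argument.
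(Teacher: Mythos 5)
Your forward direction and your treatment of the generic case $i_t<n-1$ are sound, and the converse there takes a genuinely different route from the paper. The paper splits cases according to the number of R\'edei lines of $\mathcal{B}_f$ and, in the unique-R\'edei-line case, invokes \cite[Proposition 2.3]{BoPol} to lift the collineation $\Phi$ to a semilinear map carrying the rank-$(n+1)$ subspace $V_1=\overline{U}_f\oplus\langle(0,0,1)\rangle_{\fq}$ onto $V_2$, and only then restricts to $\ell_\infty$. You instead work directly with a matrix representative of $\Phi$: after observing that $\ell_\infty$ is the unique line of maximal intersection (correct, since every other line meets $\mathcal{B}_f$ in at most $q^{i_t}+1\leq q^{n-2}+1<q^{n-1}+1\leq|\mathcal{D}_f|$ points), you normalize away the translation part using the additivity of $g$ and read off $U_f^{(M,\sigma)}=U_g$ from $\tilde\Phi(\mathcal{G}_f)=\mathcal{G}_g$ and a cardinality count. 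This is self-contained and avoids the external lifting result for linear blocking sets; the paper's route is shorter on the page but outsources exactly this step to \cite{BoPol}.

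There is, however, a genuine gap in your exceptional case $i_t=n-1$. Theorem \ref{th:setfew}(ii) and \cite[Theorem 5]{LunPol2000} only give that $\mathcal{B}_f$ and $\mathcal{B}_g$ are equivalent \emph{as pointsets} to $\mathcal{B}_{\mathrm{Tr}}$; concluding from this that the \emph{polynomials} $f$ and $g$ are equivalent to $\mathrm{Tr}_{q^n/q}$ --- i.e.\ that the subspaces $U_f$ and $U_{\mathrm{Tr}}$ lie in the same $\Gamma\mathrm{L}(2,q^n)$-orbit --- is precisely the hard implication of the theorem you are proving, so as written the argument is circular. Equality of linear sets does not in general force equivalence of the underlying subspaces (this is the whole point of the $\Gamma\mathrm{L}$-class). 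The paper closes this by noting that every R\'edei line of $\mathcal{B}_f$ meets it in an $(n-1)$-club and then applying \cite[Theorem 2.3]{DeBoeckVdV2016}, which yields that $U_f$ is $\mathrm{GL}(2,q^n)$-equivalent to $U_{\mathrm{Tr}_{q^n/q}(x)}$; this simplicity-type statement for $(n-1)$-clubs is the missing ingredient, and note that you need its $\mathrm{GL}$ (not merely $\Gamma\mathrm{L}$) form to obtain the second, $\mathrm{PGL}$/linear-equivalence half of the statement in this case.
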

\begin{proof}
First of all note that if $\fq$ is the maximum field of linearity of the polynomial $f(x)$, then $\fq$ is the maximum field of linearity of the $\fq$-subspace $U_f$ and hence by \cite[Proposition 2.3]{CsMP} there exists a point $P$ of $\ell_\infty$ of weight one in $\mathcal{B}_f$. This means that there exists at least one line of $\PG(2,q^n)$ through the point $P$ intersecting $\mathcal{B}_f$ in exactly $q+1$ points, i.e. the blocking set $\mathcal{B}_f$ has exponent $h$, and by \cite[Theorem 4.1]{DeBeule}.
If $\mathcal{B}_f$ has more than one R\'edei line, then also $\mathcal{B}_g$ admits more than one R\'edei line as well.  By \cite[Theorem 5]{LunPol2000} it follows that $\mathcal{B}_f$ and $\mathcal{B}_g$ are $\mathrm{PGL}(3,q^n)$-equivalent to $\mathcal{B}_{\mathrm{Tr}_{q^n/q}(x)}$. Recall that the size of $\mathcal{B}_{\mathrm{Tr}_{q^n/q}(x)}$ is $q^n+q^{n-1}+1$ and all the R\'edei lines of $\mathcal{B}_{\mathrm{Tr}_{q^n/q}(x)}$ pass through the point $\langle (1,0,0) \rangle_{\fqn}$ which has weight $n-1$ in $\mathcal{B}_{\mathrm{Tr}_{q^n/q}(x)}$.
This implies that the R\'edei lines of $\mathcal{B}_f$ meet $\mathcal{B}_f$ in an $\fq$-linear set of rank $n$ of size $q^{n-1}+1$ and containing a point of weight $n-1$, that is an $(n-1)$-club.
Since $\ell_\infty$ is a R\'edei line, then by \cite[Theorem 2.3]{DeBoeckVdV2016}, we have that $U_f$ is $\mathrm{GL}(2,q^n)$-equivalent to $U_{\mathrm{Tr}_{q^n/q}(x)}$ and the assertion is proved.
Now, suppose that both $\mathcal{B}_f=L_{V_1}$ and $\mathcal{B}_g=L_{V_2}$ have only one R\'edei line, which is the line $\ell_{\infty}$.
In particular, 
\[ V_1=\overline{U}_f \oplus \langle (0,0,1)\rangle_{\fq}\,\,\text{and}\,\, V_2=\overline{U}_g \oplus \langle (0,0,1)\rangle_{\fq}, \]
where
\[ \overline{U}_f=\{(x,f(x),0)\colon x \in \fqn\} \]
and 
\[ \overline{U}_g=\{(x,g(x),0)\colon x \in \fqn\}. \]
So, $\mathcal{B}_f\cap \ell_{\infty}=L_{\overline{U}_f}$ and $\mathcal{B}_g\cap \ell_{\infty}=L_{\overline{U}_g}$.
Suppose that $\mathcal{B}_f$ and $\mathcal{B}_g$ are $\mathrm{P\Gamma L}(3,q^n)$-equivalent. 
By \cite[Proposition 2.3]{BoPol}, there exists $\varphi \in\mathrm{\Gamma L}(3,q^n)$ such that $\varphi(V_1)=V_2$. 
Since $\ell$ is the unique R\'edei line, we also have $\varphi(\overline{U}_f)=\overline{U}_g$, i.e. $U_f$ and $U_g$ are $\mathrm{\Gamma L}(2,q^n)$-equivalent.
Conversely, suppose that $U_f$ and $U_g$ are $\mathrm{\Gamma L}(2,q^n)$-equivalent via a map $\mu \in \mathrm{\Gamma L}(2,q^n)$. Then we may extend $\mu$ to an element $\varphi \in \mathrm{\Gamma L}(3,q^n)$ such that  $\varphi(\overline{U}_f)=\overline{U}_g$ and $\varphi((0,0,1))=(0,0,1)$, so that $\varphi(V_1)=V_2$ and therefore $\mathcal{B}_f$ and $\mathcal{B}_g$ are $\mathrm{P\Gamma L}(3,q^n)$-equivalent.
The last part can be proved in a similar way.
\end{proof}

\begin{remark}
In \cite[Theorem 5]{LunPol2000}, the following is stated. Let $\mathcal{B}$ be an $\fq$-linear  blocking  set of  $\PG(2,q^n)$ of  size $|\mathcal{B}|\geq q^n+q^{n-1}+1$. If $\mathcal{B}$  has  at  least  two  R\'edei  lines  then  it is  $\mathrm{P\Gamma L}(3,q^n)$-equivalent to the blocking set $\mathcal{B}_{\mathrm{Tr}_{q^n/q}(x)}$.
However, following its proof, one can replace $\mathrm{P\Gamma L}(3,q^n)$ by $\mathrm{PGL}(3,q^n)$.
\end{remark}

As a consequence of the previous result  we get the following theorem.

\begin{theorem}\label{th:GLclass}
Let $f(x)$ be a $q$-polynomial in $\mathcal{L}_{n,q}$ having $\fq$ as maximum field of linearity and let $s$ be the $\mathrm{\Gamma L}$-class of $\mathcal{D}_f$.
Then $s$ is the number of $\mathrm{P\Gamma L}(3,q^n)$-inequivalent $\fq$-linear blocking sets of the form  $\mathcal B_g$ of $\PG(2,q^n)$ containing $\mathcal{D}_f$.
\end{theorem}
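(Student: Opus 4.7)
The plan is to turn Theorem \ref{th:GLclass} into a direct translation between two bookkeeping problems: counting $\mathrm{P\Gamma L}(3,q^n)$-orbits of blocking sets $\mathcal{B}_g$ containing $\mathcal{D}_f$, and counting $\Gamma\mathrm{L}(2,q^n)$-orbits of $\fq$-subspaces $U\subseteq \fqn^2$ with $L_U=\mathcal{D}_f$. The first count is controlled by Theorem \ref{th:clasB_f}; the second is, by definition, the $\Gamma\mathrm{L}$-class of $\mathcal{D}_f$.

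The first step is to identify the blocking sets in question with subspaces. Since $\mathcal{D}_f=L_{U_f}$ and $U_f=\{(x,f(x))\colon x\in\fqn\}$, the pointset $\mathcal{D}_f$ avoids $\langle(0,1)\rangle_{\fqn}$. Hence any $\fq$-subspace $U$ of $\fqn^2$ of dimension $n$ with $L_U=\mathcal{D}_f$ must meet $\langle(0,1)\rangle_{\fqn}$ trivially, so the projection on the first coordinate is bijective and $U$ is the graph of a $q$-polynomial $g\in\mathcal{L}_{n,q}$, i.e.\ $U=U_g$. Conversely, such a polynomial yields a blocking set $\mathcal{B}_g=\mathcal{G}_g\cup\mathcal{D}_g$ whose intersection with $\ell_\infty$ is exactly $\mathcal{D}_g=L_{U_g}=\mathcal{D}_f$. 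This gives a bijection between $\{\,U\subseteq \fqn^2\colon L_U=\mathcal{D}_f\,\}$ and the set of $q$-polynomials $g$ whose associated blocking set $\mathcal{B}_g$ contains (equivalently, has R\'edei part) $\mathcal{D}_f$.

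Next, I invoke the definition of $\Gamma\mathrm{L}$-class: by hypothesis, the set $\{\,U\subseteq \fqn^2\colon L_U=\mathcal{D}_f\,\}$ consists of exactly $s$ orbits under the action of $\Gamma\mathrm{L}(2,q^n)$; fix $\Gamma\mathrm{L}$-representatives $U_{g_1},\ldots,U_{g_s}$. Theorem \ref{th:clasB_f} then says that for any $q$-polynomials $g,h$ with $\fq$ as maximum field of linearity, $\mathcal{B}_g$ and $\mathcal{B}_h$ are $\mathrm{P\Gamma L}(3,q^n)$-equivalent iff $U_g$ and $U_h$ are $\Gamma\mathrm{L}(2,q^n)$-equivalent. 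Applied to $\mathcal{B}_{g_i},\mathcal{B}_{g_j}$ this gives pairwise inequivalence, and for any $\mathcal{B}_g$ containing $\mathcal{D}_f$ the corresponding $U_g$ is $\Gamma\mathrm{L}$-equivalent to a (unique) $U_{g_i}$, so $\mathcal{B}_g$ is $\mathrm{P\Gamma L}(3,q^n)$-equivalent to $\mathcal{B}_{g_i}$. This yields exactly $s$ equivalence classes, as required.

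The only delicate point — and the one that needs the maximum-field-of-linearity hypothesis on $f$ — is that the appeal to Theorem \ref{th:clasB_f} is genuinely bidirectional: when $\ell_\infty$ is the unique R\'edei line of $\mathcal{B}_g$, any $\mathrm{P\Gamma L}(3,q^n)$-equivalence between $\mathcal{B}_g$ and $\mathcal{B}_h$ is forced to preserve $\ell_\infty$, so it descends to a $\Gamma\mathrm{L}(2,q^n)$-equivalence of $U_g$ and $U_h$; in the exceptional case of several R\'edei lines, $\mathcal{B}_g$ and $\mathcal{B}_h$ are both $\mathrm{P\Gamma L}(3,q^n)$-equivalent to $\mathcal{B}_{\mathrm{Tr}}$, and correspondingly $U_g,U_h$ are both $\Gamma\mathrm{L}(2,q^n)$-equivalent to $U_{\mathrm{Tr}}$, so $s=1$ on both sides. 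Thus no information is lost in passing from blocking sets to subspaces, and the counts match.
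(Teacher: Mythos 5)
Your argument follows the same route as the paper's: identify every $\fq$-subspace $U$ with $L_U=\mathcal{D}_f$ as a graph $U_g$ (possible because $\langle(0,1)\rangle_{\fqn}\notin\mathcal{D}_f$), note that the blocking sets $\mathcal{B}_g$ containing $\mathcal{D}_f$ are exactly those built from such $g$, and then transfer the $\mathrm{P\Gamma L}(3,q^n)$-classification of the $\mathcal{B}_g$'s to the $\GammaL(2,q^n)$-classification of the $U_g$'s via Theorem \ref{th:clasB_f}. The structure and the handling of the multiple-R\'edei-line degenerate case are both fine.

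There is, however, one unverified hypothesis. Theorem \ref{th:clasB_f} is stated only for $q$-polynomials \emph{admitting $\fq$ as maximum field of linearity}, and you quote it with that proviso but never check that the polynomials $g$ you feed into it actually satisfy it: the hypothesis of Theorem \ref{th:GLclass} is that $f$ has maximum field of linearity $\fq$, and it is not automatic that every $g$ with $\mathcal{D}_g=\mathcal{D}_f$ (equivalently every $\GammaL$-representative $U_{g_i}$ of $\mathcal{D}_f$) inherits this property --- a priori one of these other representatives could be $\F_{q^d}$-linear for some $d>1$, and then your appeal to Theorem \ref{th:clasB_f} for that $g$ would be unjustified. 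The paper closes this gap by observing that $\mathcal{D}_g=\mathcal{D}_f$ forces $\mathrm{Im}\bigl(g(x)/x\bigr)=\mathrm{Im}\bigl(f(x)/x\bigr)$ and then invoking \cite[Proposition 2.1]{CsMP2019}, which says the maximum field of linearity of a $q$-polynomial is determined by the image set of $g(x)/x$; hence each $g_i$ does have $\fq$ as maximum field of linearity. You should insert this verification (or an equivalent one) before applying Theorem \ref{th:clasB_f}; with it, your proof is complete and coincides with the paper's.
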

\begin{proof}
Let  $U_1,U_2,\dots, U_s$ be $s$ distinct $\GammaL$-representatives of $\mathcal{D}_f$. Since $\langle (0,1)\rangle_{\fqn}\not \in \mathcal{D}_f$,  we may write 
$$U_i=U_{g_i}=\{(x, g_i(x), 0)\,:\, x\in \fqn\}$$ 
where the $g_i(x)$'s are pairwise inequivalent $q$-polynomials. 
Hence, since $\mathcal{D}_f=L_{U_{i}}=\mathcal{D}_{g_i}$ we get
$$\mathrm{Im}\left(\frac{f(x)}x\right)=\mathrm{Im}\left(\frac{g_i(x)}x\right)$$ for every $i\in \{1,\ldots,s\}$. So, by \cite[Proposition 2.1]{CsMP2019}, $\fq$  is the maximum field of linearity of $g_i(x)$  for any $i\in \{1,\dots,s\}$. 
Now, we have
$$\mathcal{B}_{g_i}=\{\langle (x,g_i(x),\alpha)\rangle_{\fqn} \, : \, x\in \fqn,\, \alpha\in \fq, (x,\alpha)\ne(0,0)\},$$ 
for every $i\in \{1,\ldots,s\}$. 
Noting that $\mathcal{B}_{g_i} \cap \ell_{\infty}=\mathcal{D}_{g_i}=\mathcal{D}_f$,  Theorem \ref{th:clasB_f} implies the assertion.
\end{proof}

Therefore, by Theorems \ref{th:clasB_f}, \ref{th:GLclass} and \ref{th:correspondenceps}  we get the following corollary.

\begin{corollary} \label{cor:CbEquiv}
Let $f(x)$ and $g(x)$ be two $q$-polynomials in $\mathcal{L}_{n,q}$ admitting $\fq$ as maximum field of linearity.
Then $\mathcal{C}^b_f$ and $\mathcal{C}^b_g$ are  (monomially) equivalent if and only if the polynomials $f(x)$ and $g(x)$ are linearly equivalent.
Also, if the $\GammaL$-class of  $\mathcal{D}_f$ is  $s$ and $D_{g_i}$, for $i\in \{1,\dots,s\}$,  are $s$ of its representatives, then the codes  $\mathcal{C}^b_{g_i}$, $i\in \{1,\ldots,s\}$ are pairwise inequivalent. 
\end{corollary}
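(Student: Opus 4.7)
The plan is to reduce monomial equivalence of the codes to $\mathrm{PGL}(3,q^n)$-equivalence of the underlying pointsets via Theorem \ref{th:correspondenceps}, and then invoke the characterisation of $\mathrm{PGL}(3,q^n)$-equivalence of the linear blocking sets $\mathcal{B}_f$ proved in Theorem \ref{th:clasB_f}.

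More precisely, by the construction of $\mathcal{C}^b_f$ given at the beginning of Section \ref{sec:connect}, this code is $\Psi([(\mathcal{B}_f,\mathrm{m})])$ with $\mathrm{m}\equiv 1$ on $\mathcal{B}_f$, and analogously for $\mathcal{C}^b_g$. Theorem \ref{th:correspondenceps} tells us that monomial equivalence of $\mathcal{C}^b_f$ and $\mathcal{C}^b_g$ is the same as $\mathrm{PGL}(3,q^n)$-equivalence of the two projective systems. Since both systems have uniform multiplicity one, and $\mathcal{B}_f$ spans $\mathrm{PG}(2,q^n)$ (it contains $\langle(0,0,1)\rangle_{\fqn}\in\mathcal{G}_f$ together with points of $\mathcal{D}_f\subseteq\ell_\infty$, ensuring non-degeneracy), this equivalence of systems is exactly the existence of a $\mathrm{PGL}(3,q^n)$-collineation sending $\mathcal{B}_f$ onto $\mathcal{B}_g$. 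The second part of Theorem \ref{th:clasB_f} characterises this in turn as the linear equivalence of $f(x)$ and $g(x)$, settling the first assertion.

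For the second assertion, let $U_{g_1},\dots,U_{g_s}$ be $s$ pairwise $\GammaL(2,q^n)$-inequivalent $\GammaL$-representatives of $\mathcal{D}_f$, with associated $q$-polynomials $g_i(x)$. By Theorem \ref{th:GLclass}, the blocking sets $\mathcal{B}_{g_1},\dots,\mathcal{B}_{g_s}$ are pairwise $\mathrm{P\Gamma L}(3,q^n)$-inequivalent. Since $\mathrm{PGL}(3,q^n)\subseteq \mathrm{P\Gamma L}(3,q^n)$, they are \emph{a fortiori} pairwise $\mathrm{PGL}(3,q^n)$-inequivalent, and the first part of the corollary then yields that the codes $\mathcal{C}^b_{g_i}$ are pairwise monomially inequivalent. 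The only mildly subtle point in the whole argument is keeping track of the distinction between $\mathrm{PGL}$-equivalence (which corresponds to monomial equivalence via Theorem \ref{th:correspondenceps}) and $\mathrm{P\Gamma L}$-equivalence (which appears in Theorem \ref{th:GLclass} and in the semilinear half of Theorem \ref{th:clasB_f}); beyond this bookkeeping, no genuine obstacle is expected, the result being a direct combination of Theorems \ref{th:correspondenceps}, \ref{th:clasB_f} and \ref{th:GLclass}.
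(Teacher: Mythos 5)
Your argument is correct and follows exactly the route the paper intends: the corollary is stated there as an immediate consequence of Theorems \ref{th:correspondenceps}, \ref{th:clasB_f} and \ref{th:GLclass}, and you combine precisely these three results in the natural way (monomial equivalence $\leftrightarrow$ $\mathrm{PGL}(3,q^n)$-equivalence of the multiplicity-one projective systems $\mathcal{B}_f$, then the linear-equivalence characterisation, then a fortiori $\mathrm{PGL}$-inequivalence of the representatives). Your explicit attention to non-degeneracy and to the $\mathrm{PGL}$ versus $\mathrm{P\Gamma L}$ bookkeeping is a welcome addition to what the paper leaves implicit.
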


\begin{remark}
Consider $f(x)=x^{q}\in \mathcal{L}_{n,q}$. The $\mathrm{\Gamma L}$-class of $\mathcal{D}_f$ is $\varphi(n)/2$ (see \cite[Remark 5.6]{CsMP}), where $\varphi$ is the Euler totient function, and its representatives are 
\[U_i=\{(x,x^{q^i}) \colon x \in \fqn\}\]
where $i \in \{1,\ldots, \lfloor n/2 \rfloor\}$ and $\gcd(i,n)=1$.
Therefore, by the above corollary we obtain at least $\varphi(n)/2$ inequivalent codes arising from $\mathcal{D}_{x^q}$.
\end{remark}

By Theorem \ref{th:clasB_f} and  \ref{th:setfew} we obtain the following result.

\begin{proposition}\label{prop:clasC_f}
Let $q>2$ and let $f(x)$ and $g(x)$ be two $q$-polynomials in $\mathcal{L}_{n,q}$ admitting $\fq$ as the maximum field of linearity.
Then $\mathcal{C}_f$ and $\mathcal{C}_g$ are $\mathrm{P\Gamma L}(3,q^n)$-equivalent if and only if the polynomials $f(x)$ and $g(x)$ are equivalent.
Also, $\mathcal{C}_f$ and $\mathcal{C}_g$ are $\mathrm{PG L}(3,q^n)$-equivalent if and only if the polynomials $f(x)$ and $g(x)$ are linearly equivalent.
\end{proposition}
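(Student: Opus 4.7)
The plan is to intrinsically characterise $\ell_\infty$ inside $\mathcal{C}_f$ using the line-intersection pattern provided by Theorem \ref{th:setfew}, and then reduce the equivalence problem to Theorem \ref{th:clasB_f}, mirroring the strategy that has already worked for $\mathcal{B}_f$.

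The first step is to note, from Theorem \ref{th:setfew}(j), that for $q>2$ the pointset $\mathcal{C}_f$ has exactly $t+3$ pairwise distinct line-intersection numbers
\[
0,\ 2,\ q,\ q^{i_2},\ \ldots,\ q^{i_t},\ q^n+1-|\mathcal{D}_f|,
\]
and that $\ell_\infty$ is the unique line of $\PG(2,q^n)$ attaining the last value, all other (necessarily affine) lines meeting $\mathcal{C}_f$ in $0,\,2,\,q$ or some $q^{i_j}$ points. This is the only property of $\mathcal{C}_f$ that will actually be used, and it is exactly what fails in the $q=2$ cases (jj)--(jv) of Theorem \ref{th:setfew}.

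For the forward direction, suppose $\phi\in\mathrm{P\Gamma L}(3,q^n)$ satisfies $\phi(\mathcal{C}_f)=\mathcal{C}_g$. Equivalent pointsets share the same line-intersection spectrum, so $|\mathcal{D}_f|=|\mathcal{D}_g|$, and $\ell_\infty$ is also the distinguished line of $\mathcal{C}_g$; hence $\phi(\ell_\infty)=\ell_\infty$. Being a collineation, $\phi$ then permutes the affine points, so $\phi(\mathcal{G}_f)=\phi(\mathcal{C}_f\setminus\ell_\infty)=\mathcal{C}_g\setminus\ell_\infty=\mathcal{G}_g$; restricting $\phi$ to $\ell_\infty$ and taking complements in $\ell_\infty$ yields $\phi(\mathcal{D}_f)=\mathcal{D}_g$. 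Consequently $\phi(\mathcal{B}_f)=\mathcal{G}_g\cup\mathcal{D}_g=\mathcal{B}_g$, and Theorem \ref{th:clasB_f} gives the equivalence of $f(x)$ and $g(x)$.

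For the converse, if $U_f^\sigma=U_g$ for some $\sigma\in\mathrm{\Gamma L}(2,q^n)$, I would extend $\sigma$ block-diagonally to $\hat\sigma\in\mathrm{\Gamma L}(3,q^n)$ fixing the vector $(0,0,1)$, exactly as in the second half of the proof of Theorem \ref{th:clasB_f}. Then $\hat\sigma$ fixes $\ell_\infty$ setwise, sends $\overline{U}_f$ to $\overline{U}_g$ and $\mathcal{G}_f$ to $\mathcal{G}_g$, so it maps $\mathcal{D}_f$ to $\mathcal{D}_g$ and their complements accordingly; therefore $\hat\sigma(\mathcal{C}_f)=\mathcal{C}_g$. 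The $\mathrm{PGL}(3,q^n)$-statement is obtained by running exactly the same argument with $\sigma$ taken linear instead of semilinear. The only non-routine step is the canonical identification of $\ell_\infty$ in the first paragraph; the remainder is bookkeeping, and is precisely the reason the hypothesis $q>2$ cannot be dropped.
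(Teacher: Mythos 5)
Your proposal is correct and follows essentially the same route as the paper: for $q>2$ the line $\ell_\infty$ is intrinsically characterised by its intersection number with $\mathcal{C}_f$ (the paper phrases this as the line containing the maximum number of points of $\mathcal{C}_f$), so any collineation carrying $\mathcal{C}_f$ to $\mathcal{C}_g$ fixes $\ell_\infty$, hence carries $\mathcal{D}_f$ to $\mathcal{D}_g$ and $\mathcal{B}_f$ to $\mathcal{B}_g$, and the statement reduces to Theorem \ref{th:clasB_f}. Your explicit block-diagonal extension for the converse is only a slightly more detailed write-up of what the paper leaves implicit in its appeal to Theorem \ref{th:clasB_f}.
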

\begin{proof}
Since $q>2$ and $\fq$ is the maximum field of linearity, by Theorem \ref{th:setfew}  the line $\ell_{\infty}$ is the line of $\PG(2,q^n)$ containing the maximum number of points of $\mathcal{C}_f$ and $\mathcal{C}_g$. Hence, if an element $\varphi \in \mathrm{P \Gamma L}(3,q^n)$ (or $\varphi \in \mathrm{PGL}(3,q^n)$) maps $\mathcal{C}_f$ in  $\mathcal{C}_g$, then $\varphi$ fixes $\ell_{\infty}$ and hence  $\varphi(\mathcal{D}_f)= \mathcal{D}_g$, i.e.    $\varphi(\mathcal{B}_f)= \mathcal{B}_g$, so the assertion follows by Theorem \ref{th:clasB_f}.
\end{proof}

By the previous proposition and by Theorem  \ref{th:GLclass} similarly to the case of blocking sets, we obtain the following results.

\begin{proposition}
Let $f(x)$ be a $q$-polynomial in $\mathcal{L}_{n,q}$ with $q>2$ and having $\fq$ as maximum field of linearity. Let $s$ be the $\mathrm{\Gamma L}$-class of $\mathcal{D}_f$.
Then $s$ is the number of $\mathrm{P\Gamma L}(3,q^n)$-inequivalent $\fq$-linear co-blocking sets of the form $\mathcal C_g$ of $\PG(2,q^n)$ containing $\ell_{\infty} \setminus \mathcal{D}_f$.
\end{proposition}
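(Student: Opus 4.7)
The plan is to mirror the proof of Theorem~\ref{th:GLclass}, replacing the blocking-set construction $\mathcal{B}_{(\cdot)}$ by the co-blocking-set construction $\mathcal{C}_{(\cdot)}$ and using Proposition~\ref{prop:clasC_f} in place of Theorem~\ref{th:clasB_f}. The hypothesis $q>2$ enters exactly to permit this substitution, since it is what guarantees in Proposition~\ref{prop:clasC_f} that $\ell_\infty$ is intrinsically recoverable from $\mathcal{C}_g$ (being the line of $\PG(2,q^n)$ carrying the maximum number of its points).

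First I would fix $s$ pairwise $\GammaL$-inequivalent $\GammaL$-representatives $U_1,\dots,U_s$ of $\mathcal{D}_f$. Since $\langle(0,1)\rangle_{\fqn}\notin\mathcal{D}_f$, each can be written as $U_i=U_{g_i}=\{(x,g_i(x))\colon x\in\fqn\}$ for a $q$-polynomial $g_i\in\mathcal{L}_{n,q}$, and one checks as in the proof of Theorem~\ref{th:GLclass} that $\mathrm{Im}(g_i(x)/x)=\mathrm{Im}(f(x)/x)$, so by \cite[Proposition 2.1]{CsMP2019} each $g_i$ has $\fq$ as maximum field of linearity. Hence each $\mathcal{C}_{g_i}=\mathcal{G}_{g_i}\cup(\ell_\infty\setminus\mathcal{D}_{g_i})$ satisfies $\mathcal{D}_{g_i}=L_{U_i}=\mathcal{D}_f$, which means $\mathcal{C}_{g_i}\cap\ell_\infty=\ell_\infty\setminus\mathcal{D}_f$, so each $\mathcal{C}_{g_i}$ has the required form.

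Now I would apply Proposition~\ref{prop:clasC_f}: since $q>2$ and each $g_i$ has $\fq$ as maximum field of linearity, $\mathcal{C}_{g_i}$ and $\mathcal{C}_{g_j}$ are $\mathrm{P\Gamma L}(3,q^n)$-equivalent if and only if $g_i$ and $g_j$ are equivalent, equivalently $U_i$ and $U_j$ are $\GammaL$-equivalent. By the very definition of $\GammaL$-representatives this forces $i=j$, so the $\mathcal{C}_{g_i}$ are $s$ pairwise $\mathrm{P\Gamma L}$-inequivalent co-blocking sets of the claimed form. Conversely, any other $\mathcal{C}_g$ of the form in the statement has $\mathcal{C}_g\cap\ell_\infty=\ell_\infty\setminus\mathcal{D}_g=\ell_\infty\setminus\mathcal{D}_f$, hence $L_{U_g}=\mathcal{D}_g=\mathcal{D}_f$; thus $U_g$ is itself a $\GammaL$-representative of $\mathcal{D}_f$ and is $\GammaL$-equivalent to some $U_i$, from which Proposition~\ref{prop:clasC_f} again yields that $\mathcal{C}_g$ is $\mathrm{P\Gamma L}$-equivalent to $\mathcal{C}_{g_i}$. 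Combining the two directions gives exactly $s$ inequivalent classes.

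The main subtlety I anticipate is the interpretation of ``containing $\ell_\infty\setminus\mathcal{D}_f$'': taken literally, a $\mathcal{C}_g$ with $\mathcal{D}_g\subsetneq\mathcal{D}_f$ would also contain $\ell_\infty\setminus\mathcal{D}_f$. The natural reading, parallel to Theorem~\ref{th:GLclass}, is the equality $\mathcal{C}_g\cap\ell_\infty=\ell_\infty\setminus\mathcal{D}_f$, which is what makes the argument close; otherwise one must separately argue that if $\mathcal{D}_g$ properly sits inside $\mathcal{D}_f$ then either $g$ drops maximum field of linearity (contradicting the assumption) or the rank/size constraints on $\fq$-linear sets arising from $q$-polynomials already force $\mathcal{D}_g=\mathcal{D}_f$.
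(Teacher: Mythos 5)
Your proposal is correct and follows exactly the route the paper intends: the paper gives no explicit proof, stating only that the result follows from Proposition \ref{prop:clasC_f} together with the argument of Theorem \ref{th:GLclass} ``similarly to the case of blocking sets,'' and your write-up is precisely that transposition, including the verification via \cite[Proposition 2.1]{CsMP2019} that each representative $g_i$ retains $\fq$ as maximum field of linearity. Your closing remark on reading ``containing $\ell_\infty\setminus\mathcal{D}_f$'' as the equality $\mathcal{C}_g\cap\ell_\infty=\ell_\infty\setminus\mathcal{D}_f$ is the same implicit convention the paper uses in Theorem \ref{th:GLclass}.
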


\begin{corollary} \label{cor:CcEquiv}
Let $q>2$ and let $f(x)$ and $g(x)$ be $q$-polynomials in $\mathcal{L}_{n,q}$ admitting $\fq$ as maximum field of linearity.
Then $\mathcal{C}^c_f$ and $\mathcal{C}^c_g$ are  (monomially) equivalent if and only if the polynomials $f(x)$ and $g(x)$ are linearly equivalent.
Also, if  $s$ the $\GammaL$-class of  $\mathcal{D}_f$ and $U_{g_i}$, $i\in \{1,\dots,s\}$, are $s$ of its representatives, then the codes $\mathcal{C}^c_{g_i}$, with $i\in \{1,\dots,s\}$, are pairwise inequivalent.  
\end{corollary}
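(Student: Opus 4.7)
The plan is to mirror the proof of Corollary \ref{cor:CbEquiv}, replacing the role of Theorem \ref{th:clasB_f} by its $\mathcal{C}_f$--analogue Proposition \ref{prop:clasC_f}. The two ingredients are (a) the bijection of Theorem \ref{th:correspondenceps} that converts monomial equivalence of codes into $\mathrm{PGL}$--equivalence of projective systems, and (b) the description of the $\mathrm{PGL}(3,q^n)$--equivalence of the pointsets $\mathcal{C}_f,\mathcal{C}_g$ in terms of linear equivalence of the polynomials.

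For the first assertion, I would invoke Theorem \ref{th:correspondenceps} applied to the codes $\mathcal{C}^c_f$ and $\mathcal{C}^c_g$: their associated projective systems are $(\mathcal{C}_f,\mathrm{m})$ and $(\mathcal{C}_g,\mathrm{m})$, where every point has multiplicity one, and so the correspondence gives that $\mathcal{C}^c_f$ and $\mathcal{C}^c_g$ are monomially equivalent if and only if $\mathcal{C}_f$ and $\mathcal{C}_g$ are $\mathrm{PGL}(3,q^n)$--equivalent. The second half of Proposition \ref{prop:clasC_f} (available because $q>2$ and both polynomials have $\fq$ as maximum field of linearity) then tells us this happens precisely when $f$ and $g$ are linearly equivalent.

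For the second assertion, suppose $\mathcal{D}_f$ has $\GammaL$--class $s$, with $\GammaL$--representatives $U_{g_1},\ldots,U_{g_s}$. By definition these subspaces lie in pairwise distinct $\mathrm{\Gamma L}(2,q^n)$--orbits, hence a fortiori in pairwise distinct $\mathrm{GL}(2,q^n)$--orbits, since $\mathrm{GL}(2,q^n)\subseteq \mathrm{\Gamma L}(2,q^n)$. Therefore the $q$--polynomials $g_1,\ldots,g_s$ are pairwise linearly inequivalent, and applying the first part of the corollary pairwise yields that the codes $\mathcal{C}^c_{g_1},\ldots,\mathcal{C}^c_{g_s}$ are pairwise (monomially) inequivalent.

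The only subtle point is verifying that the hypotheses of Proposition \ref{prop:clasC_f} are met by every representative, namely that each $g_i$ has $\fq$ as its maximum field of linearity. This is exactly the observation already exploited in the proof of Theorem \ref{th:GLclass}: since $L_{U_{g_i}}=\mathcal{D}_f$, one has $\mathrm{Im}(g_i(x)/x)=\mathrm{Im}(f(x)/x)$, and then \cite[Proposition 2.1]{CsMP2019} forces $\fq$ to be the maximum field of linearity of $g_i$. With this minor but essential point in place, the rest of the argument is a purely formal translation through Theorem \ref{th:correspondenceps} and Proposition \ref{prop:clasC_f}; no further computation or geometric argument is required.
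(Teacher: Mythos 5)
Your proof is correct and follows essentially the same route as the paper, which obtains this corollary directly from Proposition \ref{prop:clasC_f} together with Theorem \ref{th:GLclass} and the code/projective-system correspondence of Theorem \ref{th:correspondenceps}, exactly mirroring the derivation of Corollary \ref{cor:CbEquiv}. Your explicit verification that each representative $g_i$ retains $\fq$ as maximum field of linearity (via $\mathrm{Im}(g_i(x)/x)=\mathrm{Im}(f(x)/x)$) is precisely the point the paper handles inside the proof of Theorem \ref{th:GLclass}.
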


Finally, by Corollaries \ref{cor:CbEquiv} and \ref{cor:CcEquiv} we have that the known inequivalent $q$-polynomials defining linear sets  on the projective line with few weights give rise to inequivalent  linear codes with few weights.

\begin{corollary}
The codes $\mathcal{C}_f^b$ and $\C_{f}^c$ constructed from the examples of $\fq$-polynomials given in Tables \ref{scattpoly}, \ref{clubpoly} and \ref{2wpoly} are pairwise inequivalent (cfr. Appendix).
\end{corollary}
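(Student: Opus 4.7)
The plan is to reduce the equivalence problem for the codes to the problem of linear equivalence (i.e. $\mathrm{GL}(2,q^n)$-equivalence) of the underlying $q$-polynomials, and then to invoke the existing classification in the literature for the polynomials listed in the three tables of the Appendix. More precisely, by Corollary \ref{cor:CbEquiv} (respectively Corollary \ref{cor:CcEquiv}, which requires $q>2$ but can be supplemented by a direct weight-enumerator argument when $q=2$) the codes $\mathcal{C}_f^b$ and $\mathcal{C}_g^b$ (resp.\ $\mathcal{C}_f^c$ and $\mathcal{C}_g^c$) are monomially equivalent if and only if $f(x)$ and $g(x)$ are linearly equivalent in $\mathcal{L}_{n,q}$. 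Thus the statement will follow if we exhibit, for each pair of polynomials $(f,g)$ appearing in Tables \ref{scattpoly}, \ref{clubpoly}, \ref{2wpoly}, a reason why $f$ and $g$ are not linearly equivalent.

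I would split the polynomial list into three groups according to the type of the associated linear set $\mathcal{D}_f$ on $\PG(1,q^n)$: scattered ($t=1$), $i$-club ($t=2$ with a single point of weight $i>1$), and two-weight linear sets with two points of weight $n/2$. Polynomials lying in \emph{different} groups cannot be linearly equivalent, because the weight distribution of $\mathcal{D}_f$ is an invariant of the $\GammaL$-orbit of $U_f$, and hence of the linear equivalence class of $f$; alternatively, one can simply read off the enumerator polynomials from Section~\ref{sec:connect} and observe that both the number of nonzero weights and the specific weights differ across the three groups (e.g.\ the weight $N_c-q^{n/2}$ appears only in the two-weight case, while scattered codes have the shortest weight enumerator). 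This already handles all cross-group pairs.

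For pairs inside the same group one invokes the classification that accompanies each table. Within the scattered family (Table \ref{scattpoly}) and the $i$-club family (Table \ref{clubpoly}) the entries correspond to polynomials that are already known to be pairwise inequivalent under $\GammaL(2,q^n)$ (see the references cited there, e.g.\ \cite{CsMP2019} and the equivalence discussion of the Lunardon–Polverino, Bartoli–Zhou, and related families), hence a fortiori under $\mathrm{GL}(2,q^n)$. The same applies to Table \ref{2wpoly}, using the inequivalence results proved in \cite{NPSZ2021}. Additionally, when a given $\mathcal{D}_f$ has $\GammaL$-class $s>1$ (for instance $f(x)=x^q$, whose $\GammaL$-class is $\varphi(n)/2$ by \cite[Remark~5.6]{CsMP}), the second halves of Corollaries \ref{cor:CbEquiv} and \ref{cor:CcEquiv} automatically produce $s$ pairwise inequivalent codes from the $s$ distinct representatives.

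The main obstacle is the bookkeeping in the last paragraph: one has to trace through the literature to confirm that all polynomials in the three tables are indeed pairwise linearly inequivalent, being careful about the subtle distinction between $\mathrm{GL}$-equivalence and $\GammaL$-equivalence of $q$-polynomials (a nontrivial Frobenius twist can produce $\GammaL$-equivalent but linearly inequivalent polynomials, which would still yield inequivalent codes by Corollary \ref{cor:CbEquiv}/\ref{cor:CcEquiv}). Once this verification is made, the corollary follows directly from the equivalence criteria established in Section~\ref{sec:equiv}; the case $q=2$ for $\mathcal{C}_f^c$ is handled separately by noting that the cross-group weight enumerators already differ and that the within-group cases reduce to the analogous statement for $\mathcal{C}_f^b$ via $\mathcal{B}_f$ being the R\'edei closure of $\mathcal{C}_f$.
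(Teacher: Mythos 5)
Your proposal matches the paper's argument: the paper obtains this corollary by simply combining Corollaries \ref{cor:CbEquiv} and \ref{cor:CcEquiv} with the statement in the Appendix that distinct entries of Tables \ref{scattpoly}, \ref{clubpoly} and \ref{2wpoly} correspond to pairwise $\GammaL(2,q^n)$-inequivalent subspaces $U_f$ (hence to linearly inequivalent polynomials), exactly your reduction. Your extra bookkeeping --- separating cross-group pairs via the weight enumerators and flagging that Corollary \ref{cor:CcEquiv} does not cover $q=2$ for $\mathcal{C}_f^c$ --- is more careful than the paper, which states the corollary with no further proof and leaves the $q=2$ case of $\mathcal{C}_f^c$ implicit.
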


Note that the weight distributions of the codes arising from Tables \ref{scattpoly}, \ref{clubpoly} and \ref{2wpoly} are described in Section \ref{sec:connect} (see also the appendix).

\section*{Acknowledgements}
We want to thank the referees for their careful reading and for their valuable comments. 
The research was supported by the project ``VALERE: Vanvitelli pEr la RicErca" of the University of Campania ``Luigi Vanvitelli'' and was partially supported by the Italian National Group for Algebraic and Geometric Structures and their Applications (GNSAGA - INdAM).

\section*{Appendix}

In the following tables we list the known examples of $q$-polynomials $f(x)$ such that $\mathcal{D}_f$ is scattered, an $i$-club or it has only two points of weight greater than one.
Different entries of the tables correspond to $\Gamma \mathrm{L}(2,q^n)$-inequivalent subspaces $U_f=\{(x,f(x)) \,:\, x\in \fqn\}$.

\begin{table}[htp]
\tabcolsep=0.2 mm
\begin{tabular}{|c|c|c|c|c|c|}
\hline
\hspace{0.5cm} & \hspace{0.2cm}$n$\hspace{0.2cm} & $f(x)$ & \mbox{conditions} & \mbox{references} \\ \hline
i) & & $x^{q^s}$ & $\gcd(s,n)=1$ & \cite{BL2000} \\ \hline
ii) & & $x^{q^s}+\delta x^{q^{s(n-1)}}$ & $\begin{array}{cc} \gcd(s,n)=1,\\ \mathrm{N}_{q^n/q}(\delta)\neq 1 \end{array}$ & \cite{LunPol2000,LMPT2015}\\ \hline
iii) & $2\ell$ & $x^{q^s}+x^{q^{s(\ell-1)}}+\delta^{q^\ell+1}x^{q^{s(\ell+1)}}+\delta^{1-q^{2\ell-1}}x^{q^{s(2\ell-1)}}$ & $\begin{array}{cc} q \hspace{0.1cm} \text{odd}, \\ \mathrm{N}_{q^{2\ell}/q^\ell}(\delta)=-1,\\ \gcd(s,t)=1 \end{array}$ & \cite{BZZ,LMTZ,LZ2,NSZ,ZZ}\\ \hline
iv) & $6$ & $x^q+\delta x^{q^{4}}$  &  $\begin{array}{cc} q>4, \\ \text{certain choices of} \, \delta \end{array}$ & \cite{CMPZ,BCsM,PZ2019} \\ \hline
v) & $6$ & $x^{q}+x^{q^3}+\delta x^{q^5}$ & $\begin{array}{cccc}q \hspace{0.1cm} \text{odd}, \\ \delta^2+\delta =1 \end{array}$
 & \cite{CsMZ2018,MMZ} \\ \hline
vi) & $8$ & $x^{q}+\delta x^{q^5}$ & $\begin{array}{cc} q\,\text{odd},\\ \delta^2=-1\end{array}$ & \cite{CMPZ} \\ \hline
\end{tabular}
\caption{Known examples of scattered  $\mathcal{D}_{f}$}
\label{scattpoly}
\end{table}

\begin{table}[htp]
\tabcolsep=0.2 mm
\begin{tabular}{|c|c|c|c|c|c|}
\hline
\hspace{0.5cm} & \hspace{0.2cm}$n$\hspace{0.2cm} & \hspace{0.2cm}$i$\hspace{0.2cm}& $f(x)$ & \mbox{conditions} & \mbox{references} \\ \hline
i) & $rt$ & $r(t-1)$ & $\mathrm{Tr}_{q^{rt}/q^r}\circ x^{q^s}$ & $\gcd(s,n)=1$ & \cite{DeBoeckVdV2016,KM}\\ \hline
ii) &  & $n-2$ & $\mathrm{Tr}_{q^n/q}(b_{n-2}x)+\lambda\mathrm{Tr}_{q^n/q}(b_{n-1}x)$ & 
$\begin{array}{ccc}
\gcd(s,n)=1\\
\fq(\lambda)=\fqn\\
\mathcal{B}=(1,\lambda,\ldots,\lambda^{n-3},\lambda^{n-2}+\omega,\omega\lambda)\\
\mathcal{B}\,\, \fq\text{-basis of }\fqn\\
(b_0,\ldots,b_{n-1})\,\,\,\text{dual basis of } \mathcal{B}
\end{array}$ & \cite{DeBoeckVdV2016,NPSZ202x}\\ \hline
iii) & $rt$ & $r(t-1)$ & $f\left( \mathrm{Tr}_{q^n/q^r}(c_0x) \right)-a\mathrm{Tr}_{q^n/q^r}(c_0x)$ & 
$\begin{array}{ccc} 
f(x) \in \mathcal{L}_{r,q}\,\, \text{scattered}\\
\{1,\omega,\ldots,\omega^{t-1}\}\,\,\, \F_{q^r}\text{-basis of }\F_{q^n}\\
c_0=\frac{1}{g'(\omega)}\sum_{j=0}^{t-1}\omega^j a_{i+j+1}\\
g(x)=a_0+a_1x+\ldots+a_{t-1}x^{t-1}+x^t\\ 
g(x)\text{ minimal polynomial of }\omega\text{ over } \F_{q^r}\\
f(x)-ax\,\, \text{is invertible over}\,\,\F_{q^r}\\
\end{array}$
& \cite{DeBoeckVdV2016,GW2003,NPSZ202x}\\ \hline
iv) & $rt$ & $r(t-1)+1$ & $f\left( \mathrm{Tr}_{q^n/q^r}(c_0x) \right)-a\mathrm{Tr}_{q^n/q^r}(c_0x)$ & 
$\begin{array}{ccc} 
f(x) \in \mathcal{L}_{r,q}\,\, \text{scattered}\\
\{1,\omega,\ldots,\omega^{t-1}\}\,\,\, \F_{q^r}\text{-basis of }\F_{q^n}\\
c_0=\frac{1}{g'(\omega)}\sum_{j=0}^{t-1}\omega^j a_{i+j+1}\\
g(x)=a_0+a_1x+\ldots+a_{t-1}x^{t-1}+x^t\\ 
g(x)\text{ minimal polynomial of }\omega\text{ over } \F_{q^r}\\
f(x)-ax\,\, \text{is not invertible over}\,\,\F_{q^r}\\
\end{array}$
& \cite{DeBoeckVdV2016,GW2003,NPSZ202x}\\ \hline
\end{tabular}
\caption{Known examples of $\mathcal{D}_{f}$ which are $i$-club}
\label{clubpoly}
\end{table}

\newpage

\begin{table}[htp]
\tabcolsep=0.2 mm
\begin{tabular}{|c|c|c|c|c|c|}
\hline
\hspace{0.5cm} & \hspace{0.2cm}$n$\hspace{0.2cm} & $f(x)$ & \mbox{conditions} & \mbox{references} \\ \hline
i) & $2t$ & $\mathrm{Tr}_{q^n/q^t}\left( f\left(\frac{x}{\epsilon^{q^t}-\epsilon} \right) \right)+\mathrm{Tr}_{q^n/q^t}\left( \frac{\epsilon^{q^t} x}{\epsilon^{q^t}-\epsilon}\right)$ & $\begin{array}{ccc}
\{1,\epsilon\}\,\,\F_{q^t}\text{-basis of }\fqn\\
f(z)=\sum_{i=0}^{t-1} A_iz^{q^i} \in \mathcal{L}_{t,q} \text{ scattered}\\
\end{array}$ & \cite{NPSZ2021} \\\hline
ii) & $2t$ & $\sum_{k=0}^{n-1}\left( \sum_{\ell=0}^{t-1} (u_\ell+u_\ell^{q^s}\xi ){\lambda_{\ell}^*}^{q^k} \right)x^{q^k}$ & $\begin{array}{ccc}
\gcd(s,t)=1\\
\{1,\xi\}\,\,\F_{q^t}\text{-basis of }\fqn\\
\mu \in \F_{q^t} \colon \N_{q^t/q}(\mu)\neq 1 \\ \N_{q^{t}/q}(-\xi^{q^t+1}\mu)\neq (-1)^t\\
\{u_0,\ldots,u_{t-1}\}\,\, \fq\text{-basis of }\F_{q^t}\\
\mathcal{B}=(u_0+\mu u_0^{q^s}\xi,\ldots,u_{t-1}+ \mu u_{t-1}^{q^s}\xi,\\u_0+u_0^{q^s}\xi,\ldots,u_{t-1}+u_{t-1}^{q^s}\xi)\,\,\fq\text{-basis } \fqn\\
(\lambda_0^*,\ldots,\lambda_{n-1}^*)\text{ dual basis of }\fqn
\end{array}$ & \cite{NPSZ2021} \\\hline
\end{tabular}
\caption{Known examples of  $\mathcal{D}_{f}$ having exactly two points of weight $n/2$ and all the others of weight one}
\label{2wpoly}
\end{table}

\begin{remark}
A detailed list of the known examples of $i$-club is provided in \cite{DeBoeckVdV2016}, whereas a polynomial description for the ii)-iv) has been obtained in \cite{NPSZ202x}.
\end{remark}

\begin{remark}
The examples presented in Table \ref{2wpoly}, were originally obtained in the case $n=4$ in the paper \cite{BoPol}.
\end{remark}

\end{document}